\newtheorem{Theorem}{Theorem}[section]
\newtheorem{Definition}[Theorem]{Definition}
\newtheorem{Corollary}[Theorem]{Corollary}
\newtheorem{Lemma}[Theorem]{Lemma}
\newtheorem{Proposition}[Theorem]{Proposition}
\newtheorem{Remark}{Remark}[section]
\begin{document}
\title[Decay estimate for subcritical semilinear damped wave equations ]{Decay estimate for subcritical semilinear damped wave equations with slowly decreasing data}
\author[K.Fujiwara]{Kazumasa Fujiwara }
\address[K. Fujiwara]{
	Faculty of Advanced Science and Technology,
	Ryukoku University,
	1-5 Yokotani,Seta Oe-cho,Otsu,Shiga,
	520-2194, Japan
}

\email{fujiwara.kazumasa@math.ryukoku.ac.jp}

\author[V.Georgiev]{Vladimir Georgiev}
\address[V.Georgiev]{
Department of Mathematics,
University of Pisa,
Largo Bruno Pontecorvo 5,
I - 56127 Pisa, Italy}
\address{
Faculty of Science and Engineering, Waseda University,
3-4-1, Okubo, Shinjuku-ku, Tokyo 169-8555, Japan}
\address{
Institute of Mathematics and Informatics,  Bulgarian Academy of Sciences, Acad. Georgi Bonchev Str., Block 8, Sofia, 1113, Bulgaria
}
\email{georgiev@dm.unipi.it}

\subjclass{35B40,35B33, 35B51}
\keywords{
damped wave equations,
Fujita	critical exponent,
power-type nonlinearity,
decay estimate
}

\begin{abstract}
We study the decay properties of non-negative solutions
to the one-dimensional defocusing damped wave equation
in the Fujita subcritical case under a specific initial condition.
Specifically, we assume that the initial data
are positive, satisfy a condition ensuring the positiveness of solutions,
and exhibit polynomial decay at infinity.

To show the decay properties of the solution,
we construct suitable supersolutions composed of an explicit function
satisfying an ordinary differential inequality
and the solution of the linear damped wave equation.
Our estimates correspond to the optimal ones inferred
from the analysis of the heat equation.
\end{abstract}

\maketitle

\section{Introduction}
We consider the pointwise decay behavior
of solutions to the Cauchy problem for the following damped wave equations
under certain initial condition:
	\begin{align}
	\begin{cases}
	\partial_t^2 u + \partial_t u - \Delta u + u^p = 0,
	& t \in (0, \infty) ,\ x \in \mathbb R\\
	u(0,x) = u_0(x) & x \in \mathbb R,\\
	\partial_t u(0,x) = u_1(x) & x \in \mathbb R.
	\end{cases}
	\label{eq:DW}
	\end{align}
We assume that $p > 1$
and the initial data $u_0, u_1$ are sufficiently regular
and satisfy the conditions
	\begin{align}
	\inf_{x \in \mathbb R} u_0(x),
	\quad
	\inf_{x \in \mathbb R} (u_1(x) + \frac 1 2 u_0(x)) \geq 0.
	\label{eq:IC}
	\end{align}

We call $u$ a mild solution to \eqref{eq:DW}
if $u$ is non-negative and it satisfies the integral form of \eqref{eq:DW}
given by
	\begin{align}
	u(t,x)
	= S(t)(u_0+u_1) + \partial_t S(t) u_0
	- \int_0^t S(t-\tau) u^p(\tau) d\tau,
	\label{eq:DWIntegral}
	\end{align}
where $S(t)$ denotes the solution operator defined by
	\[
	S(t) f(x)
	= \frac{e^{-t/2}}{2}
	\int_{-t}^{t} I_0\bigg( \frac{\sqrt{t^2-y^2}}{2} \bigg) f(x-y) dy
	\]
for locally integrable function $f$.
Here, $I_0$ denotes the modified Bessel function of the first kind.
For details, see \cite[Chapter 6 Section 5 Part 7]{CH89}.
Namely, $S(t) f$ solves the Cauchy problem:
	\[
	\begin{cases}
	(\partial_t^2 + \partial_t - \Delta) S(t) f = 0,
	& t \in (0,\infty), \ x \in \mathbb R,\\
	S(0) f = 0,
	& x \in \mathbb R,\\
	\partial_t S(0) f = f,
	& x \in \mathbb R.
	\end{cases}
	\]

To investigate the existence of mild solutions to \eqref{eq:DW},
we review known results for the Cauchy problem of the damped wave equation:
	\begin{align}
	\begin{cases}
	\partial_t^2 u - \Delta u + \partial_t u = \lambda |u|^{p-1} u,
	& t \in (0, \infty) ,\ x \in \mathbb R,\\
	u(0,x) = u_0(x),
	& x \in \mathbb R,\\
	\partial_t u(0,x) = u_1(x),
	& x \in \mathbb R
	\end{cases}
	\label{eq:DWg}
	\end{align}
with $\lambda \in \mathbb R$ and $p > 1$.
A mild solution, allowing sign changes,
can be constructed by applying a standard contraction mapping principle
and the estimates for the solution operator given
by Marcati-Nishihara \cite{MN03}.
In particular, they showed the asymptotic equivalence
between $S(t)$ and the heat kernel $e^{t \Delta}$.

The existence of mild-solutions to \eqref{eq:DW} is basically guaranteed
by the argument above and the a priori estimates of Li and Zhou \cite{LZ95}
under the initial condition \eqref{eq:IC}.
Indeed, if $u_0, u_1 \in C^\infty$,
then the following a priori estimate holds for any $x$ and $t$,
provided that $u$ is a mild solution of \eqref{eq:DW}:
	\begin{align}
	&e^{-t/2} \frac{u_0(x+t)+u_0(x-t)}{2}
	+ e^{-t/2} \int_{x-t}^{x+t} u_1(y) + \frac 1 2 u_0(y) dy
	\nonumber \\
	&\leq u(t,x)
	\leq S(t)(u_0+u_1)(x) + \partial_t S(t) u_0(x).
	\label{eq:apriori}
	\end{align}
Combining the a priori estimate \eqref{eq:apriori}
and contraction mapping argument discussed above,
mild solutions to \eqref{eq:DW} can be constructed.
In the case where $(u_0, u_1)
\in W^{1,1} \cap W^{1,\infty} (\mathbb R)
\times L^{1} \cap L^{\infty} (\mathbb R)$,
\eqref{eq:apriori} has also been established in \cite{FG04o}.
See also Proposition \ref{Proposition:Comparison} in Appendix.

On the other hand,
it is known that \eqref{eq:apriori}
is not sufficient to describe the decay rate of the solution $u$.
Indeed, by rewriting $-u$ as $\widetilde u$,
we have
	\[
	\begin{cases}
	\partial_t^2 \widetilde u
	+ \Delta \widetilde u
	- \Delta \widetilde u = |\widetilde u|^p,\\
	\widetilde u(0,x) = -u_0(x),\\
	\partial_t \widetilde u(0,x) = -u_1(x).
	\end{cases}
	\]
By integrating the equation above formally with respect to $x$ and $t$,
	\[
	\int_{\mathbb R} \widetilde u(t,x) + \partial_t \widetilde u(t,x) dx
	= - \int_{\mathbb R} u_0(x) + u_1(x) dx
	+ \| \widetilde u \|_{L^p(0,t ; L^p( \mathbb R))}^p.
	\]
Then $u$ belongs to $L^p (0,\infty ; L^p(\mathbb R))$
as long as it exists globally.
Indeed, if $u \not\in L^p (0,\infty ; L^p(\mathbb R))$,
then there exists $T_0$ such that
	\[
	\int_{\mathbb R} \widetilde u(T_0,x) + \partial_t \widetilde u(T_0,x) dx
	> 0.
	\]
However,
Li and Zhou \cite{LZ95} showed that
there exists $T_1 > T_0$ such that
	\[
	\liminf_{t \to T_1} \inf_{|y| \leq \sqrt t} \sqrt t |\widetilde u(t,y)|
	= \infty,
	\]
which contradicts the global existence of $u$.
Since $S(t)$ is asymptotically equivalent to $e^{t \Delta}$,
if $S(t) f \in L^p(0,\infty;L^p(\mathbb R))$ for any $f \in L^1$,
then the estimate $p > 3$ is necessary.
Therefore, when $p \leq 3$(in the Fujita critical and subcritical cases),
solutions $u$ must decay faster than free solutions for some initial data.
Namely, the asymptotic behavior of the solution is primarily governed
by nonlinear effects rather than perturbative effects.
This nonlinear effect has attracted attention.

In order to study the nonlinear effect of \eqref{eq:DW} in the subcritical case,
the decay behavior of solutions to the corresponding semilinear heat equation
has been considered as an indicator.
Let us recall the known results for the Cauchy problem
of the following semilinear heat equation:
	\begin{align}
	\begin{cases}
	\partial_t v - \Delta v + v^p = 0,\\
	v(0,x) = v_0(x),
	\end{cases}
	\label{eq:Heat}
	\end{align}
where $v_0$ and $v$ are non-negative functions.
Roughly speaking,
if $v_0$ decays sufficiently fast,
then the behavior of the solution $v$ is approximated
as $t \to \infty$
as follows
	\begin{align}
	v(t,x)
	\sim \begin{cases}
	t^{-1/(p-1)} h(|x|/t^{1/2}) & \mathrm{if} \quad 1< p < 3,\\
	\theta (\log t)^{-1/2} t^{-1/2} e^{-x^2/4t} & \mathrm{if} \quad p=3,\\
	\theta t^{-1/2} e^{-x^2/4t} & \mathrm{if} \quad p > 3.
	\end{cases}
	\label{eq:HeatAsymptotic}
	\end{align}
Here $\theta$ is a constant determined by the initial data
and $h : \lbrack 0,\infty) \to \lbrack 0,\infty)$ solves the ODE:
	\begin{align}
	\begin{cases}
	- \ddot h - \frac r 2 \dot h + h^p = \frac{1}{p-1} h,
	&r \in (0,\infty),\\
	\dot h(0)=0,
	\end{cases}
	\end{align}
and $t^{-1/2} e^{-x^2/4t}$ is the fundamental solution of the heat equation.
Therefore, in the subcritical case,
solutions $v$ is asymptotically equivalent to self-similar function.
For details, see \cite{EK88,EKM95,GKS85}.
Moreover,
Herraiz \cite[Theorem 1]{H99} showed that
the solution $v$ behaves like
	\begin{align}
	v(t,x)
	\sim
	\begin{cases}
	t^{-1/(p-1)}
	\Big(
		(p-1)^{p-1} - C t^{1-\frac{\rho(p-1)}{2}} h(|x|/t^{1/2})
	\Big)
	& \mathrm{if} \quad |x| \lesssim t^{1/2},\\
	\Big(
		(p-1)t + A^{1-p}|x|^{\rho(p-1)}
	\Big)^{-1/(p-1)}
	& \mathrm{if} \quad t^{1/2} \lesssim |x| \lesssim t^{1/\rho(p-1)},\\
	A |x|^{-\rho}
	& \mathrm{if} \quad t^{1/\rho(p-1)} \lesssim |x|.
	\end{cases}
	\label{eq:HerraizAsymptotic}
	\end{align}
when $1 < p < 3$ and $v_0$ decays slower than $h$ above,
namely
	\[
	v_0(x) \sim A |x|^{-\rho}
	\]
with $A> 0$ and $0 < \rho < 2/(p-1)$ as $|x| \to \infty$.
We note that $g(t,\varepsilon) = ((p-1)t+\varepsilon^{1-p})^{-1/(p-1)}$ solves the ODE:
	\[
	\dot g + g^p = 0
	\]
and
	\[
	g(t,A|x|^{-\rho})
	=
	\Big(
		(p-1)t + A^{1-p}|x|^{\rho(p-1)}
	\Big)^{-1/(p-1)}.
	\]
Therefore, in the subcritical case,
the behavior of solutions to \eqref{eq:Heat} may be summarized
from the viewpoint of the decay rate
of $L^q$-norm of the solutions as follows:
if $v_0 = \varepsilon \langle x \rangle^{-\rho}$,
where $\varepsilon$ is small enough and $\langle x \rangle = (1+|x|^2)^{1/2}$,
then
	\begin{align}
	\| v(t) \|_{L^q(\mathbb R)}
	\sim
	\begin{cases}
	t^{\frac{1}{2q}-\frac{1}{p-1}} & \mathrm{if} \quad \rho > \frac{2}{p-1},\\
	t^{\frac{1}{q\rho(p-1)}-\frac{1}{p-1}} & \mathrm{if} \quad 1 < \rho < \frac{2}{p-1}.
	\end{cases}
	\label{eq:ideal}
	\end{align}
for $q \in [1,\infty]$,
ensuring that $v \in L^p(0,\infty; L^p(\mathbb R))$.

Comparing to \eqref{eq:Heat},
the behavior of solutions to \eqref{eq:DW} is much less understood.
Hayashi, Kaikina, and Naumkin \cite{HKN04} showed that an asymptotic equivalence
similar to \eqref{eq:HeatAsymptotic} holds for the solution $u$ to \eqref{eq:DW},
when $(u_0,u_1)
\in (H^2 \cap W^{2,1} \cap \langle x \rangle^{a} L^1) \times (H^1 \cap W^{1,1} \cap \langle x \rangle^{a} L^1)$
with $a > 0$,
where $h$ is replaced appropriately.
However, in the subcritical case,
the asymptotic equivalence is shown when $3-\varepsilon^3 < p < 3$,
where $\varepsilon > 0$ represents the size of initial data and is small enough.
Later, Nishihara and Zhao \cite{NZ06} and Ikehata, Nishihara, and Zhao \cite{INZ06}
showed that in the subcritical case,
solutions $u$ enjoy a weighted energy estimate guaranteeing
the estimate
	\[
	\| u(t) \|_{L^q(\mathbb R)}
	\lesssim t^{\tfrac{1}{2q} - \tfrac{1}{p-1}}
	\]
for any $q \in [1,\infty]$,
when the following initial condition is satisfied:
	\[
	(1 + |x|)^{\frac{2}{p - 1} - \frac{1 - \delta}{2}}
	\bigl(u_0, \nabla u_0, u_1, |u_0|^{\tfrac{p+1}{2}}\bigr)
	\;\in\; L^2
	\]
with some small positive $\delta$.
We note that $u_0, u_1 \sim \langle x \rangle^{-\rho}$
with $\rho > \frac{2}{p-1}$,
satisfies the condition above.
We also note that Wakasugi \cite{W23}
generalized the results of \cite{INZ06}.
However, there is still a gap
between the known decay rate
for the solutions to \eqref{eq:DW} and \eqref{eq:HerraizAsymptotic}.
We remark that $L^\infty$ control corresponds to \eqref{eq:HerraizAsymptotic}
is known to hold under an initial regularity condition and \eqref{eq:IC}
in \cite{FG24}.

The aim of this manuscript is to show a pointwise decay estimate
of solutions to \eqref{eq:DW}
comparable to \eqref{eq:HerraizAsymptotic}
under a certain initial condition.
The main result is as follows:
\begin{Theorem}
\label{Theorem:Main}
Let $(u_0,u_1)
\in
W^{1,1} \cap W^{1,\infty} (\mathbb R)
\times
L^{1} \cap L^{\infty} (\mathbb R).
$
Let $\phi \in W^{1,1} \cap W^{1,\infty} (\mathbb R; (0,\infty))$.
We assume that there exists a constant $C$ such that
the estimates
	\begin{align}
	\phi(x) &\leq C \inf_{y \in (x-1,x+1)} \phi(y),
	\label{eq:IC2}\\
	\phi(x) &\leq C \inf_{|y|<2|x|} \phi(y),
	\label{eq:IC3}\\
	|\dot \phi(x)| &\leq C \phi(x)
	\label{eq:IC4}
	\end{align}
hold for almost every $x \in \mathbb R$.
We further assume that for any $\delta >0$,
there exists a positive constant $C_\delta$ such that
	\begin{align}
	e^{-\delta |x|} \leq C_\delta \phi(x)
	\label{eq:IC5}
	\end{align}
holds for almost every $x \in \mathbb R$.
Then there exist a positive constant $C$,
$T_0 \geq 0$ large enough,
and $\varepsilon_0 > 0$ small enough
such that if $u_0$ and $u_1$ satisfy
	\[
	0 \leq u_0(x), u_1(x) + \frac 1 2 u_0(x)
	\leq C \frac{\varepsilon_0}{T_0^{p-1/2}} \phi(x)
	\]
for almost every $x \in \mathbb R$,
then solutions $u$ to \eqref{eq:DW}
satisfy
	\begin{align}
	0
	\leq u(t,x)
	\leq C \bigg(
		\frac{u_L(t+T_0,x)^{p-1}}{(t+T_0) u_L(t+T_0,x)^{p-1} + 1}
	\bigg)^{\frac{1}{p-1}},
	\label{eq:Main}
	\end{align}
for almost every $x \in \mathbb R$ and $t \geq 0$,
where $u_L = \varepsilon_0 S(t) \phi + \varepsilon_0 \partial_t S(t) \phi$.
\end{Theorem}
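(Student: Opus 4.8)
The plan is to dominate $u$ by an explicit supersolution of \eqref{eq:DW} and then invoke the comparison principle. Abbreviate $s=t+T_0$ and, for $a>0$, set
\[
G(s,a)=\bigl(s+a^{1-p}\bigr)^{-1/(p-1)},
\]
so that $\partial_sG=-\tfrac1{p-1}G^p$, $\partial_aG=a^{-p}G^p$, $G(s,a)\le\min\{s^{-1/(p-1)},\,a\}$, and $\bigl(a^{p-1}/(sa^{p-1}+1)\bigr)^{1/(p-1)}=G(s,a)$; thus the right-hand side of \eqref{eq:Main} equals $C\,G\bigl(t+T_0,u_L(t+T_0,x)\bigr)$. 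We will take
\[
\bar u(t,x)=K\,G\bigl(t+T_0,\,u_L(t+T_0,x)\bigr),
\]
with $K>1$ a constant (it will be the $C$ of \eqref{eq:Main}) fixed below. Since $\phi>0$, the lower bound in \eqref{eq:apriori} for the linear equation with data $(\phi,0)$ gives $u_L>0$, hence $0<\bar u\le Ku_L$, and $\bar u$ is admissible for Proposition \ref{Proposition:Comparison}. Because the kernel of $S$ is positive, a function satisfying (a.e.) $\partial_t^2\bar u+\partial_t\bar u-\partial_x^2\bar u+\bar u^p\ge0$ on $t>0$ also satisfies the integral inequality associated with \eqref{eq:DWIntegral}; so it suffices to verify this differential inequality together with the data comparison $\bar u(0,\cdot)\ge u_0$ and $\partial_t\bar u(0,\cdot)+\tfrac12\bar u(0,\cdot)\ge u_1+\tfrac12u_0$, which is what Proposition \ref{Proposition:Comparison} requires.

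For the differential inequality, the key point is that $v:=u_L(t+T_0,x)$ solves $v_{tt}+v_t-v_{xx}=0$, whence
\[
\partial_t^2\bar u+\partial_t\bar u-\partial_x^2\bar u
=K\bigl(G_{ss}+G_s+2G_{sa}v_t+G_{aa}(v_t^2-v_x^2)\bigr)+KG_a(v_t+v_{tt}-v_{xx}),
\]
and the last term vanishes identically — in particular all $\partial_t^2u_L$ contributions cancel. Inserting the explicit second derivatives of $G$, using $\bar u^p=K^pG^p$ and $\partial_sG+\tfrac1{p-1}G^p=0$, and dividing by $KG^p>0$, the inequality becomes
\[
c_0+pG^{p-1}\Bigl(\tfrac1{p-1}-u_L^{-p}\partial_tu_L\Bigr)^2+p\,s\,u_L^{-p-1}G^{p-1}(\partial_xu_L)^2-p\,u_L^{-p-1}(\partial_tu_L)^2\ \ge\ 0,
\]
where $c_0:=K^{p-1}-\tfrac1{p-1}$ and we used the identity $u_L^{p-1}-G^{p-1}=s\,u_L^{p-1}G^{p-1}$ to combine the two $(\partial_xu_L)^2$ terms into the displayed non-negative one; thus no gradient estimate on $u_L$ is needed. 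Fixing $K$ so that $c_0\ge1$ and discarding the non-negative terms, it remains to check $p\bigl(u_L^{p-1}b^2-G^{p-1}(\tfrac1{p-1}-b)^2\bigr)\le c_0$ with $b=u_L^{-p}\partial_tu_L$; by the same identity and $G^{p-1}\le u_L^{p-1}$ the left-hand side is at most $p\,s\,(\partial_tu_L/u_L)^2+\tfrac{2p}{p-1}|\partial_tu_L/u_L|$, which is $\le\tfrac1s\bigl(pC_1^2+\tfrac{2pC_1}{p-1}\bigr)\le c_0$ for $s\ge T_0$ with $T_0$ large, as soon as one has the pointwise bound $|\partial_tu_L(s,x)|\le C_1s^{-1}u_L(s,x)$.

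The pair of estimates
\[
|\partial_tu_L(s,x)|\le\frac{C_1}{s}u_L(s,x)\ (s\ge T_0),
\qquad
u_L(s,x)\ge c_1\varepsilon_0s^{-1/2}\phi(x),\quad u_L\le C_0\varepsilon_0,
\]
is the analytic core and the main obstacle. From $u_L=\varepsilon_0(S(t)+\partial_tS(t))\phi$ and the fact that $S(t)\phi$ solves the linear equation one has $\partial_tu_L=\varepsilon_0\Delta S(t)\phi$; by the asymptotic equivalence of $S(t)$ with the heat semigroup \cite{MN03}, the kernel $\tfrac{e^{-s/2}}{2}I_0\bigl(\tfrac{\sqrt{s^2-y^2}}{2}\bigr)$ is, for $|y|\le s$, comparable to $s^{-1/2}e^{-cy^2/s}$ up to a remainder near $|y|\sim s$ of size $O(e^{-s/2})$. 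Two $x$-derivatives yield the gain $s^{-1}$ and a weight $\langle y/\sqrt s\rangle^2$; the conditions \eqref{eq:IC2}, \eqref{eq:IC3} (together with the resulting $e^{2s\Delta}\phi\sim e^{s\Delta}\phi$) absorb that weight, giving $|\Delta S(t)\phi|\lesssim s^{-1}S(t)\phi\lesssim s^{-1}u_L/\varepsilon_0$; the boundary contributions, involving $\phi'(x\pm s)$, are controlled via \eqref{eq:IC4} and are exponentially small, while \eqref{eq:IC5} forces $u_L$ to be no smaller than the $O(e^{-s/2})$ light-cone remainder, so that remainder is harmless. Integrating the Gaussian lower bound of the kernel over $\{|y|<1\}$ and using \eqref{eq:IC2} gives $u_L(s,x)\gtrsim\varepsilon_0s^{-1/2}\phi(x)$, and boundedness of the linear propagator on $L^\infty$ gives $u_L\le C_0\varepsilon_0$. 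Pointwise estimates of this flavour for such $\phi$ also appear in \cite{FG04o,FG24}.

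For the data comparison, with $K$ fixed and $T_0$ large as above: from $\partial_t\bar u=KG^p(-\tfrac1{p-1}+u_L^{-p}\partial_tu_L)$ and the bounds $G^{p-1}\le T_0^{-1}$, $G^pu_L^{1-p}\le G$, $|\partial_tu_L|\le C_1T_0^{-1}u_L$ at $s=T_0$, one obtains $\partial_t\bar u(0,x)+\tfrac12\bar u(0,x)\ge\tfrac14\bar u(0,x)$. Since $u_0,\,u_1+\tfrac12u_0\le C\varepsilon_0T_0^{-(p-1/2)}\phi$, both required inequalities follow from $\bar u(0,x)\ge4C\varepsilon_0T_0^{-(p-1/2)}\phi(x)$; and
\[
\bar u(0,x)=KG\bigl(T_0,u_L(T_0,x)\bigr)\ge K2^{-1/(p-1)}\min\{T_0^{-1/(p-1)},\,c_1\varepsilon_0T_0^{-1/2}\phi(x)\},
\]
so, since $\phi\le\|\phi\|_\infty$, the bound holds whenever $C\le c^{*}K\min\{T_0^{p-1},\,(\varepsilon_0\|\phi\|_\infty)^{-1}T_0^{(p-1/2)-1/(p-1)}\}$ for a suitable $c^{*}=c^{*}(p)$ (the two cases corresponding to whether $u_L(T_0,x)$ lies below or above $T_0^{-1/(p-1)}$); as $p>1$ such a $C>0$ exists. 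Choosing $\varepsilon_0$ small and $T_0$ large enough to meet all the above requirements, and $C$ accordingly, Proposition \ref{Proposition:Comparison} gives $0\le u\le\bar u$, i.e.\ \eqref{eq:Main}. As indicated, the crux is the pair of kernel estimates in the third paragraph; the remainder is the algebra of the identity $\dot g+g^p=0$ combined with the fact that $u_L$ annihilates the linear operator.
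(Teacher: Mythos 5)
Your overall architecture coincides with the paper's: you compose an ODE profile with the linear solution $u_L$, observe that the concavity of the profile in its second argument makes the $(\partial_x u_L)^2$ term have a favourable sign so that it can be discarded, control the remaining cross terms by a pointwise bound on $\partial_t u_L/u_L$, and close with the comparison principle. Your one genuinely different ingredient is the choice of profile: you take $K\,G(s,a)$ with $G(s,a)=(s+a^{1-p})^{-1/(p-1)}$ and $K$ large, generating a surplus $c_0KG^p$ with $c_0=K^{p-1}-\tfrac{1}{p-1}$, whereas the paper uses $w(t)g(t,a)$ with a time-dependent weight $w$ engineered so that the ODE supersolution has an extra surplus $C_{p,\alpha}(1+t)^{-1-\alpha}H$ on top of $C_{p,\alpha}g^{p-1}H$. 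Your identity $u_L^{p-1}-G^{p-1}=s\,u_L^{p-1}G^{p-1}$, which converts the dangerous $p\,u_L^{-p-1}(\partial_tu_L)^2$ into $G^{p-1}$ times $s(\partial_tu_L/u_L)^2+\tfrac{2}{p-1}|\partial_tu_L/u_L|$ plus a square, is correct and is a real simplification: it makes the error uniformly proportional to $G^p$, so a constant surplus suffices and no weight $w(t)$ is needed. (The paper, by contrast, bounds $(g^*/u_L)^{p-1}\le 1$ and is then forced to beat an error of size $H^*\langle t\rangle^{-4(1-\sigma)}$ in the outer region where $g^{p-1}H$ is too small, which is exactly what the $\langle t\rangle^{-1-\alpha}H$ surplus is for.)

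The genuine gap is where you yourself locate it: the pair of pointwise kernel estimates, above all $|\partial_t u_L(s,x)|\le C_1 s^{-1}u_L(s,x)$ together with $u_L(s,x)\gtrsim \varepsilon_0 s^{-1/2}\phi(x)$. These are asserted via an appeal to the asymptotic equivalence of $S(t)$ with $e^{t\Delta}$, but that equivalence is an $L^q$ statement with additive remainders, not a pointwise multiplicative bound relative to $S(t)\phi$ itself; the remainder near the light cone $|y|\sim t$ and the non-Gaussian correction $\tfrac{t}{\omega}I_1(\omega/2)-I_0(\omega/2)$ have to be beaten by $S(t)\phi(x)\gtrsim t^{-1/2}\phi(x)$, and this is precisely what the paper's Section 3 is devoted to. Crucially, the paper only obtains the rate $|\partial_tS(t)\phi|+|\partial_t^2S(t)\phi|\le C_\sigma t^{-2(1-\sigma)}S(t)\phi$ with $\sigma\in(1/2,1)$, i.e.\ strictly slower than $t^{-1}$: the splitting at $|y|=t^\sigma$ gives $\tfrac{y^2}{\omega(t+\omega)}\le t^{-2(1-\sigma)}$ in the bulk while the tail is only $e^{-t^{2\sigma-1}/4}$, and pushing $\sigma\to 1/2$ destroys the tail bound, so the rate $t^{-1}$ you invoke is not reachable by this method and is not established anywhere. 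Fortunately your own absorption inequality only requires $p\,s(\partial_tu_L/u_L)^2+\tfrac{2p}{p-1}|\partial_tu_L/u_L|\le c_0$, i.e.\ a rate $o(s^{-1/2})$, and the provable rate $s^{-2(1-\sigma)}$ with $\sigma\in(1/2,3/4)$ satisfies this (note $\partial_tu_L=\varepsilon_0(\partial_tS(t)\phi+\partial_t^2S(t)\phi)$ and $u_L\ge\tfrac{\varepsilon_0}{2}S(t)\phi$ for large $t$). So your proof can be completed by replacing the claimed $s^{-1}$ bound with the weaker, actually provable one — but as written, the analytic core of the argument is missing, and the specific estimate you lean on is stronger than what is available.
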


We note that
$\phi(x) = \langle x \rangle^{-\rho}$ with $\rho > 1$
satisfies the condition of Theorem \ref{Theorem:Main}.
Then we have the following corollary:
\begin{Corollary}
\label{Corollary:Main}
Let $\varepsilon > 0$ be small enough and $1 < \rho < \frac{2}{p-1}$.
Let $0 \leq u_0, u_1 + u_0 / 2 \leq \varepsilon \langle x \rangle^{-\rho}$.
Then, $u$ enjoys the estimate
	\[
	\| u(t) \|_{L^q} \lesssim t^{\tfrac{1}{q\rho(p-1)} - \tfrac{1}{p-1}}
	\]
for $q \in [1,\infty]$.
Especially $u \in L^p(0,\infty; L^p(\mathbb R))$.
\end{Corollary}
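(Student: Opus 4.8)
The plan is to deduce Corollary~\ref{Corollary:Main} directly from Theorem~\ref{Theorem:Main} applied with $\phi(x) = \langle x \rangle^{-\rho}$. First I would verify that this choice of $\phi$ satisfies the hypotheses of the theorem: membership in $W^{1,1}\cap W^{1,\infty}(\mathbb R;(0,\infty))$ holds since $\rho>1$ (so $\langle x\rangle^{-\rho}\in L^1$) and $\langle x\rangle^{-\rho}$ together with its derivative is bounded; the local almost-monotonicity \eqref{eq:IC2} and the doubling-type bound \eqref{eq:IC3} follow from the elementary inequalities $\langle x\rangle \sim \langle y\rangle$ for $|x-y|\le 1$ and $\langle y\rangle \gtrsim \langle x\rangle$ for $|y| < 2|x|$ (up to a constant independent of $x$); the logarithmic-derivative bound \eqref{eq:IC4} is immediate from $|\dot\phi(x)| = \rho |x| \langle x\rangle^{-\rho-2} \le \rho \langle x\rangle^{-\rho} = \rho\,\phi(x)$; and \eqref{eq:IC5} holds because $e^{-\delta|x|}$ decays faster than any polynomial. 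Absorbing the fixed constants $C/T_0^{p-1/2}$ into the smallness of $\varepsilon$, the hypothesis $0\le u_0, u_1 + u_0/2 \le \varepsilon\langle x\rangle^{-\rho}$ puts us in the scope of the theorem after possibly shrinking $\varepsilon$.

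Next I would unwind the right-hand side of \eqref{eq:Main}. Writing $u_L = \varepsilon_0 S(t)\phi + \varepsilon_0 \partial_t S(t)\phi$ with $\phi = \langle x\rangle^{-\rho}$, the key input is the pointwise behavior of the linear damped wave flow, which by the Marcati--Nishihara asymptotic equivalence with the heat semigroup (cited in the excerpt) satisfies $u_L(t,x) \sim \varepsilon_0\, (e^{t\Delta}\langle\cdot\rangle^{-\rho})(x)$ for large $t$. For $\rho < 2/(p-1) < 2$ the integral $\langle x\rangle^{-\rho}$ is not integrable decay-wise at the relevant scale, and a standard stationary-phase/scaling computation for the heat kernel gives $(e^{t\Delta}\langle\cdot\rangle^{-\rho})(x) \sim t^{-\rho/2}$ for $|x|\lesssim t^{1/2}$ and $\sim |x|^{-\rho}$ for $|x|\gtrsim t^{1/2}$; in compact form $u_L(t+T_0,x) \sim \varepsilon_0 \big( (t+T_0)^{1/2} + |x| \big)^{-\rho}$. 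Plugging this into the bracket $\big( u_L^{p-1} / ((t+T_0)u_L^{p-1}+1) \big)^{1/(p-1)}$ I would analyze the two regimes: where $(t+T_0)u_L^{p-1}\gtrsim 1$ the bound reduces to $(t+T_0)^{-1/(p-1)}$, and where $(t+T_0)u_L^{p-1}\lesssim 1$ it reduces to $u_L \sim ((t+T_0)^{1/2}+|x|)^{-\rho}$; the crossover occurs at $|x|\sim (t+T_0)^{1/\rho(p-1)}$, exactly matching the three-zone picture of \eqref{eq:HerraizAsymptotic}.

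Finally I would compute the $L^q$ norm of the majorant zone by zone. In the inner/self-similar zone $|x|\lesssim (t+T_0)^{1/\rho(p-1)}$ the majorant is $\lesssim t^{-1/(p-1)}$ on a set of measure $\lesssim t^{1/\rho(p-1)}$, contributing $\lesssim t^{1/(q\rho(p-1))-1/(p-1)}$ to the $L^q$ norm; in the outer tail $|x|\gtrsim (t+T_0)^{1/\rho(p-1)}$ the majorant is $\lesssim |x|^{-\rho}$, and since $\rho q > 1$ (guaranteed, after checking, by $\rho>1$ and $q\ge 1$; one should double-check the borderline case $q=1$, $\rho>1$) the tail integral is $\lesssim (t^{1/\rho(p-1)})^{1/q - \rho}$, which simplifies to the same power $t^{1/(q\rho(p-1))-1/(p-1)}$. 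Combining the two zones yields $\|u(t)\|_{L^q} \lesssim t^{1/(q\rho(p-1))-1/(p-1)}$. Taking $q=p$ gives the exponent $\frac{1}{p\rho(p-1)} - \frac{1}{p-1} = \frac{1-p\rho}{p\rho(p-1)}$; since $\rho(p-1)<2$ forces $p\rho < 2p/(p-1)$, one checks this exponent times $p$ is strictly less than $-1$ exactly when $\rho(p-1)<2$, hence $u\in L^p((0,\infty);L^p(\mathbb R))$. The main obstacle I anticipate is making the pointwise heat-kernel asymptotics for $u_L$ with the non-integrable weight $\langle x\rangle^{-\rho}$ precise and uniform in $x$ — in particular transferring them from $e^{t\Delta}$ to $S(t)$ via the Marcati--Nishihara estimates with the correct error control in the intermediate zone $|x|\sim t^{1/2}$ — and handling the endpoint $q=1$ in the tail integral where the decay $\rho>1$ is only barely enough.
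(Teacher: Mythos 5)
Your proposal follows essentially the same route as the paper: apply Theorem~\ref{Theorem:Main} with $\phi=\langle x\rangle^{-\rho}$, bound $u_L$ pointwise by heat-kernel asymptotics, and compute the $L^q$ norm of the resulting majorant by splitting at $|x|\sim t^{1/\rho(p-1)}$. The paper packages the linear input as the single pointwise comparison $S(t)\phi\leq C e^{t\Delta}\phi$ (with Lemma~\ref{Lemma:LinearEstimate1} absorbing the $\partial_tS(t)\phi$ part of $u_L$), so that by monotonicity of the majorant in $u_L$ everything reduces to Proposition~\ref{Proposition:HeatSuperSolution}, which is exactly your zone-by-zone computation. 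Two slips are worth correcting. First, for $\rho>1$ the profile $e^{t\Delta}\langle\cdot\rangle^{-\rho}(x)\sim(t^{1/2}+|x|)^{-\rho}$ is false in the inner region: since $\langle\cdot\rangle^{-\rho}\in L^1$, one has $e^{t\Delta}\langle\cdot\rangle^{-\rho}(x)\sim t^{-1/2}$ for $|x|\lesssim t^{1/2}$ (not $t^{-\rho/2}$), and the transition to the $|x|^{-\rho}$ regime occurs only around $|x|\sim\sqrt{t\log t}$. This is harmless for your argument, because in the inner zone you only invoke the trivial bound $\bigl(u_L^{p-1}/((t+T_0)u_L^{p-1}+1)\bigr)^{1/(p-1)}\leq (t+T_0)^{-1/(p-1)}$, and the outer bound $u_L\lesssim|x|^{-\rho}$ is valid on $|x|\geq t^{1/\rho(p-1)}\gg\sqrt{t\log t}$ since $\rho(p-1)<2$; but the claim as stated should not survive into a final write-up. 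Second, the integrability check at $q=p$: $p\bigl(\tfrac{1}{p\rho(p-1)}-\tfrac{1}{p-1}\bigr)<-1$ is equivalent to $\rho>1$, not to $\rho(p-1)<2$, so it is the hypothesis $\rho>1$ that delivers $u\in L^p(0,\infty;L^p(\mathbb R))$.
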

Therefore,
we conclude that,
under the initial condition of Corollary \ref{Corollary:Main},
the solution $u$ to \eqref{eq:DW} decays
as fast as the solution to \eqref{eq:Heat} with the same initial condition.

We note that Theorem \ref{Theorem:Main} holds for any $\rho > 1$
but the sharp upperbound \eqref{eq:ideal} can be obtained
by Theorem \ref{Theorem:Main} only when $\rho < \frac{2}{p-1}$.
Indeed,
a logarithmic loss occurs
when one estimate the $L^q$ norm
of the RHS of the second estimate of \eqref{eq:Main}
with $\phi(x) = \langle x \rangle^{-\rho}$ with $\rho \geq \frac{2}{p-1}$.
For details,
see Proposition \ref{Proposition:HeatSuperSolution2} in Appendix.
This logarithmic loss also implies that
solutions to \eqref{eq:DW} cannot be estimated from below
by a function taking the form of the RHS of the second estimate of \eqref{eq:Main}
at least when $\rho \geq \frac{2}{p-1}$.

Our approach showing Theorem \ref{Theorem:Main}
is to compare solution $u$ with supersolutions.
For the definition of supersolution,
see Section \ref{Section:Proof}.
We assume the initial condition \eqref{eq:IC}
to apply a comparison argument.
For details, see Appendix \ref{Proposition:Comparison}.
We construct supersolutions by combining ODE supersolutions and free solutions.
To illustrate the idea,
we reconsider \eqref{eq:Heat}.
The ODE corresponding to \eqref{eq:Heat} is
	\begin{align}
	\begin{cases}
	\dot g + g^p = 0,\\
	g(0) = f \geq 0.
	\end{cases}
	\label{eq:HODE}
	\end{align}
Here, $g(t)=G(t,f) = ((p-1)t+f^{1-p})^{-1/(p-1)}$.
Then, we claim that $G^\ast(t,x) = G(t,e^{t \Delta} \phi(x))$ is a supersolution
to \eqref{eq:Heat}
when $\phi$ is a non-negative and non-trivial function.
Indeed,
we compute that
	\[
	\partial_f G(t,f) = G^p f^{-p},
	\quad
	\partial_f^2 G(t,f) = p G^{p} f^{-2p} ( G^{p-1} - f^{p-1}),
	\]
and therefore $\partial_f^2 G < 0$ if $f > 0$.
Then we further compute
	\begin{align*}
	&\partial_t G^\ast - \Delta G^\ast + (G^\ast)^p\\
	&= (\partial_t G + G^p)^\ast
	+ (\partial_f G)^\ast (\partial_t e^{t \Delta} \phi - \Delta e^{t \Delta} \phi)
	- (\partial_f^2 G)^\ast (\partial_x \phi)^2
	> 0.
	\end{align*}
This implies that the pointwise estimate
$v(t,x) \leq G^\ast(t,x)$ holds.
We remark that
if $\phi(x) = \langle x \rangle^{-\rho}$ with $1 < \rho < \frac{2}{p-1}$,
then the estimate
	\[
	\| G^\ast (t) \|_{L^q} \lesssim t^{\frac{1}{q \rho(p-1)} - \frac{1}{p-1}}
	\]
holds for any $q \in [1,\infty]$.
Therefore,
the upper estimate of \eqref{eq:ideal}
is obtained by this comparison argument for small $\rho$.
For details, see Proposition \ref{Proposition:HeatSuperSolution}
in Appendix.

We note that,
since
$\ddot g(t)
= p g^{2p-1}$,
$g$ is a supersolution to the ODE:
	\begin{align}
	\ddot f + \dot f + f^p = 0,
	\label{eq:ODEDW}
	\end{align}
which corresponds to \eqref{eq:DW}.
We remark that a comparison argument for \eqref{eq:ODEDW}
holds under an initial condition
	\[
	f(0) \geq 0,
	\quad
	\dot f(0) + \frac 1 2 f(0) \geq 0.
	\]
One can check this fact by a similar argument
to Proposition \ref{Proposition:Comparison}.
On the other hand,
$\widetilde G(t) = G(t,u_L(t))$ may not be a supersolution to \eqref{eq:DW},
where $u_L(t) = S(t) \phi + \partial_t S(t) \phi$ solving
	\begin{align}
	\begin{cases}
	\partial_t^2 u_L + \partial_t u_L - \Delta u_L = 0,\\
	u_L(0) = \phi,\\
	\partial_t u_L(0) = 0.
	\end{cases}
	\label{eq:ODEDW60}
	\end{align}
Indeed, by putting $\widetilde G^\ast(t,x) = \widetilde G(t,u_L(t,x))$,
we compute
	\begin{align*}
	&\partial_t^2 \widetilde G^\ast
	+ \partial_t \widetilde G^\ast
	- \Delta \widetilde G^\ast + (\widetilde G^\ast)^p\\
	&= (
		\partial_t^2 \widetilde G + \partial_t \widetilde G + \widetilde G^p
	)^\ast
	+ (\partial_f \widetilde G)^\ast
	(\partial_t^2 + \partial_t - \Delta) u_L
	+ 2 (\partial_t \partial_f \widetilde G)^\ast \partial_t u_L
	+ (\partial_f^2 \widetilde G)^\ast
	((\partial_t u_L)^2 - (\partial_x u_L)^2)\\
	&= p (\widetilde G^{2p-1})^\ast
	- 2 p (\widetilde G^{2p-1})^\ast u_L^{-p} \partial_t u_L
	+ p \Big( (\widetilde G/u_L)^{2p-2} - (\widetilde G/u_L)^{p-1} \Big)^\ast
	(\widetilde G)^\ast u_L^{-2}
	\Big((\partial_t u_L)^2 - (\partial_x u_L)^2\Big).
	\end{align*}
We note that $\widetilde G \leq u_L$ holds
because $\widetilde G(t,u_L(t)) \leq \widetilde G(0,u_L(t)) = u_L(t)$.
Then in the above computation,
it is unclear whether
$\widetilde G$ is a supersolution to \eqref{eq:DW}.
In this manuscript,
we overcome this difficulty
by using a modified supersolution to \eqref{eq:ODEDW60}.

This manuscript is organized as follows:
In section \ref{Section:ODE},
we discuss a supersolution to the ODE related with \eqref{eq:DW}.
In section \ref{Section:Linear},
we show some estimates related with linear solutions to \eqref{eq:DW}.
In section \ref{Section:Proof},
we prove Theorem \ref{Theorem:Main} and Corollary \ref{Corollary:Main}.

\section{ODE related with \eqref{eq:DW}}
\label{Section:ODE}
In this section,
we consider a supersolution to \eqref{eq:ODEDW}.
The construction of supersolutions
is the basis to show the decaying estimate Theorem \ref{Theorem:Main}.

\begin{Proposition}
\label{Proposition:ODESuperSolution}
Let $p > 1$ and $\varepsilon > 0$ small.
Let $\alpha \in (0,1]$.
Let
	\begin{align}
	\label{eq.defwg1}
	g(t, \varepsilon)
	= \bigg(
	\frac{p (p+1) \varepsilon^{p-1}}{(p-1)^2 \varepsilon^{p-1} t + p^2+p}
	\bigg)^{1/(p-1)},
	\quad
	w(t)
	= \bigg( \frac 1 2 + \frac{1}{(2^{1/\alpha} + t)^{\alpha}} \bigg)^{-1/(p-1)}.
	\end{align}
Then $H(t,\varepsilon) = w(t) g(t,\varepsilon)$ enjoys the following properties
with a positive constant $C_{p,\alpha}$ depending only on $p$ and $\alpha$:
\begin{equation}\label{eq.odest6}
	\left\{\begin{aligned}
	&\ddot H + \dot H + H^p
	\geq \frac{C_{p,\alpha}}{(1+t)^{\alpha+1}} H
	+ C_{p,\alpha} g^{p-1} H,
	&t > 0\\
	&\dot H + \frac 1 2 H \geq 0,
	&t \geq 0\\
	& H(0) = \varepsilon,\\
	&\dot H(0)
	= \frac{\alpha}{(p-1) 2^{(\alpha+1)/\alpha}} \varepsilon - \frac{p-1}{p(p+1)} \varepsilon^{p}.
	\end{aligned}\right.
\end{equation}
\end{Proposition}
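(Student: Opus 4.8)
The plan is to reduce everything to two scalar computations, one for $g$ and one for $w$, and then combine them by the product rule. First I would record the facts about $g$. With $a=\frac{(p-1)^{2}}{p(p+1)}$ one checks $g(t,\varepsilon)^{1-p}=at+\varepsilon^{1-p}$, so $g$ solves $\dot g=-\frac{p-1}{p(p+1)}g^{p}$, hence $\ddot g=\frac{(p-1)^{2}}{p(p+1)^{2}}g^{2p-1}$ and
\[
\ddot g+\dot g+g^{p}=\frac{(p-1)^{2}}{p(p+1)^{2}}g^{2p-1}+\frac{p^{2}+1}{p(p+1)}g^{p}\ \ge\ \frac{p^{2}+1}{p(p+1)}g^{p}.
\]
In particular $g$ is positive and decreasing, so $0<g(t,\varepsilon)\le g(0,\varepsilon)=\varepsilon$; the resulting bound $g^{p-1}\le\varepsilon^{p-1}$ is the mechanism that will absorb the harmful terms.

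Next I would analyse $w$. Writing $s=2^{1/\alpha}+t$, one has $w^{1-p}=\tfrac12+s^{-\alpha}\in(\tfrac12,1]$, so $w$ is increasing with $w(0)=1$ and $1\le w<2^{1/(p-1)}$; differentiating gives $\dot w=\frac{\alpha}{p-1}s^{-\alpha-1}w^{p}>0$ and
\[
\ddot w+\dot w=\frac{\alpha}{p-1}s^{-\alpha-1}w^{p}\Big(1-\frac{\alpha+1}{s}\Big)+\frac{p\alpha^{2}}{(p-1)^{2}}s^{-2\alpha-2}w^{2p-1}.
\]
The key elementary fact is $2^{1/\alpha}\ge 2\ge\alpha+1$ for $\alpha\in(0,1]$, which gives $s\ge\alpha+1$ and makes both summands nonnegative. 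Using in addition $w\ge1$, $s\le 2^{1/\alpha}(1+t)$, and splitting the cases $s\ge 2(\alpha+1)$ (where $1-(\alpha+1)/s\ge\tfrac12$) and $s<2(\alpha+1)$ (where $t$, hence $s$, is bounded so the second summand is bounded below), one obtains $\ddot w+\dot w\ge c_{p,\alpha}(1+t)^{-\alpha-1}$ with $c_{p,\alpha}>0$ depending only on $p,\alpha$; this is the source of the $(1+t)^{-\alpha-1}H$ term in \eqref{eq.odest6}. The remaining elementary data then follow at once: $H(0)=w(0)g(0,\varepsilon)=\varepsilon$, $\dot H(0)=\dot w(0)\varepsilon+w(0)\dot g(0,\varepsilon)=\frac{\alpha}{(p-1)2^{(\alpha+1)/\alpha}}\varepsilon-\frac{p-1}{p(p+1)}\varepsilon^{p}$, and since $\dot H+\tfrac12H=\dot w\,g+wg\big(\tfrac12-\tfrac{p-1}{p(p+1)}g^{p-1}\big)$ with $\dot w,w,g>0$ and $g^{p-1}\le\varepsilon^{p-1}$, the parenthesis is positive once $\varepsilon$ is small, giving $\dot H+\tfrac12H\ge0$.

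For the differential inequality I would expand $H=wg$:
\[
\ddot H+\dot H+H^{p}=w(\ddot g+\dot g+g^{p})+(\ddot w+\dot w)g+2\dot w\dot g+(w^{p}-w)g^{p}.
\]
The last term is $\ge0$ because $w\ge1$ and $p>1$; by the first step the first term is $\ge\frac{p^{2}+1}{p(p+1)}wg^{p}=\frac{p^{2}+1}{p(p+1)}g^{p-1}H$; and by the second step the second term is $\ge c_{p,\alpha}(1+t)^{-\alpha-1}g\ge c_{p,\alpha}2^{-1/(p-1)}(1+t)^{-\alpha-1}H$, using $g=H/w\ge 2^{-1/(p-1)}H$. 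The only negative contribution, and the real obstacle, is the cross term $2\dot w\dot g=-\frac{2(p-1)}{p(p+1)}\dot w g^{p}$ — the ODE analogue of the difficulty noted in the Introduction for $\widetilde G$. I would bound it by $|2\dot w\dot g|\le\frac{2(p-1)}{p(p+1)}\varepsilon^{p-1}\dot w\,g$ (using $g^{p-1}\le\varepsilon^{p-1}$) and then $\dot w\le\frac{\alpha}{p-1}2^{p/(p-1)}(1+t)^{-\alpha-1}$ (using $w<2^{1/(p-1)}$ and $s\ge1+t$), so $|2\dot w\dot g|\le C_{p}\varepsilon^{p-1}(1+t)^{-\alpha-1}g$, which for $\varepsilon$ small is at most half of the lower bound for $(\ddot w+\dot w)g$. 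Absorbing it there leaves $\ddot H+\dot H+H^{p}\ge\frac{p^{2}+1}{p(p+1)}g^{p-1}H+\tfrac12 c_{p,\alpha}2^{-1/(p-1)}(1+t)^{-\alpha-1}H$, and one takes $C_{p,\alpha}$ to be the minimum of the two constants.

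The genuinely delicate points are thus the sign and lower bound for $\ddot w+\dot w$ (resting on $2^{1/\alpha}\ge\alpha+1$) and the absorption of the cross term $2\dot w\dot g$ into it; both are what dictate the smallness required of $\varepsilon$, while everything else is a routine product-rule expansion together with the closed form $g^{1-p}=at+\varepsilon^{1-p}$ and the explicit range $1\le w<2^{1/(p-1)}$.
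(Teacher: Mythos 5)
Your proof is correct and follows essentially the same route as the paper's: the same product decomposition $H=wg$, the same closed-form ODE facts $\dot g=-\frac{p-1}{p(p+1)}g^{p}$ and $\dot w=\frac{\alpha}{p-1}(2^{1/\alpha}+t)^{-\alpha-1}w^{p}$, and the same key elementary inequality $2^{1/\alpha}\ge\alpha+1$ for $\alpha\in(0,1]$ behind $\ddot w+\dot w\ge 0$. The only (immaterial) difference is that you absorb the cross term $2\dot w\dot g$ into the $(\ddot w+\dot w)g$ term using the smallness of $\varepsilon$ via $g^{p-1}\le\varepsilon^{p-1}$, whereas the paper absorbs it into the $w^{p}g^{p}$ term via the pointwise bound $\dot w\le\frac{\alpha}{(p-1)2^{1/\alpha}}w$, so no smallness of $\varepsilon$ is needed at that particular step.
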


\begin{proof}
For simplicity, we abbreviate $g(t,\varepsilon)$ as $g(t)$.
We compute
	\[
	g(0) = \varepsilon, \quad
	\dot g(t) = - \frac{p-1}{p(p+1)} g(t)^p, \quad
	\quad \ddot g(t) = \frac{(p-1)^2}{p(p+1)^2} g(t)^{2p-1} > 0
	\]
and
	\begin{align*}
	\dot w(t)
	&= - \frac{1}{(p-1)} w(t)^p
	\frac{d}{dt} \bigg( \frac 1 2 + \frac{1}{(2^{1/\alpha} + t)^{\alpha}} \bigg)\\
	&= \frac{\alpha}{(p-1)} \frac{1}{(2^{1/\alpha}+t)^{\alpha+1}} w^p(t).
	\end{align*}
These identities above imply that
$H= w g$ satisfies
	\[
	H(0) = \varepsilon, \quad
	\dot H(0) = g(0) \dot w(0) + w(0) \dot g(0)
	= \frac{\alpha}{(p-1) 2^{(\alpha+1)/\alpha}} \varepsilon - \frac{p-1}{p(p+1)} \varepsilon^{p}.
	\]
We also estimate
	\begin{align}
	\ddot w(t) + \dot w(t)
	&= \bigg( \frac{p \alpha}{(p-1)} \frac{1}{(2^{1/\alpha}+t)^{\alpha+1}} w^{p-1}(t)
	+ \frac{2^{1/\alpha} - \alpha-1 + t}{2^{1/\alpha}+t} \bigg) \dot w(t)
	\geq 0.
	\label{eq:ddw+dw}
	\end{align}
Here we have used the fact that
$2^{1/\alpha} - \alpha-1$
is strictly decreasing for positive $\alpha$
and is $0$ at $\alpha = 1$.
We further estimate
	\[
	\dot w(t)
	\leq \frac{\alpha}{(p-1)2^{1/\alpha}} w(t)
	\]
and
	\begin{align*}
	w^{p-1}(t) - \frac{p-1}{p+1}
	&\geq  1 - \frac{p-1}{p+1} \geq \frac{2}{p+1}.
	\end{align*}
Then the estimates above imply that
	\begin{align*}
	\ddot H + \dot H + H^p
	&= w \ddot g + (\ddot w + \dot w)g + (2 \dot w + w) \dot g + w^p g^p\\
	&\geq \bigg( w^p - \frac{p-1}{p(p+1)} w - \frac{2(p-1)}{p(p+1)} \dot w \bigg) g^p
	+ (\ddot w + \dot w)g\\
	&\geq \bigg( \frac{2}{p+1} - \frac{2\alpha}{p(p+1)2^{1/\alpha}} \bigg) w g^p
	+ \frac{C}{(2^{(\alpha+1)/\alpha}+t)^{\alpha+1}} H.
	\end{align*}
Moreover, we have
	\[
	\dot H + \frac 1 2 H
	= \dot w g + \frac 1 2 w g + w \dot g\\
	\geq \bigg( \frac 1 2 - \frac{p-1}{p(p+1)} g^{p-1} \bigg) g
	\geq 0,
	\]
where we have used the assumption that
$\varepsilon$ is small and
	\[
	\max_{1 \leq p \leq 3} \frac{p-1}{p(p+1)}
	=\frac{p-1}{p(p+1)} \bigg|_{p=1+\sqrt{2}}
	\sim 0.17 < \frac 1 2.
	\]
\end{proof}

\begin{Remark}
Summarizing the relations used in the proof of the previous Lemma,
we can write
	\begin{equation}\label{eq.rem7}
	\left\{   \begin{aligned}
	&\dot w(t)
	= \frac{\alpha}{(p-1)} \frac{1}{(2^{1/\alpha}+t)^{\alpha+1}} w^p(t),\\
	& \ddot w(t)
	= \bigg( \frac{p \alpha}{(p-1)} \frac{1}{(2^{1/\alpha}+t)^{\alpha+1}} w^{p-1}(t)
	 +\frac{  \alpha+1}{2^{1/\alpha}+t} \bigg) \dot w(t).
	\end{aligned}\right.
	\end{equation}
We have also the following relations for the function $g(t,\varepsilon)$
defined in \eqref{eq.defwg1};
	\begin{equation}\label{eq.relcr7}
	g(t, \varepsilon)^{p-1}
	= \frac{p (p+1) \varepsilon^{p-1}}{(p-1)^2 \varepsilon^{p-1} t + p^2+p},
	\end{equation}
so we have the following partial derivatives up to order 2 in $t$
\begin{equation}\label{eq.rem34}
 \left\{   \begin{aligned}
	&
	\partial_t g
	= - \frac{p-1}{p(p+1)} g^p,\\
	& \partial_t^2 g= \frac{(p-1)^2}{p(p+1)^2} g^{2p-1} .
	\end{aligned}\right.
\end{equation}
Similarly, from \eqref{eq.relcr7} for the partial derivatives in $\varepsilon$ we have
\begin{equation}\label{eq.pade1}
  \left\{  \begin{aligned}
		&
	\partial_\varepsilon g
	= g^{p} \varepsilon^{-p},\\
	& \partial_\varepsilon^2 g = (-p\varepsilon^{-1} +p g^{p-1}) \partial_\varepsilon g =
	 (-p\varepsilon^{-1} +p g^{p-1}) g^p \varepsilon^{-p}.
	\end{aligned} \right.
\end{equation}

Finally, differentiating  the first relation in \eqref{eq.pade1} in $t$
and using and the first relation in \eqref{eq.rem34},
we conclude that  the mixed derivative is
	\[
	\partial_t\partial_\varepsilon g = - \frac{p-1}{p+1} g^{2p-1}  \varepsilon^{-p}.
	\]
\end{Remark}

The above Remark leads to the following.
\begin{Corollary} \label{cor1}
For $\varepsilon > 0$ and $t>0$,
we have the relations
	\begin{equation}\label{eq.rem61}
 \left\{   \begin{aligned}
	&
	\partial_t H
	= \left( \frac{\alpha w^{p-1}}{(p-1)(2^{1/\alpha}+t)^{\alpha+1}} - \frac{p-1}{p(p+1)} g^{p-1} \right) H,\\
	& \partial_\varepsilon	H= H g^{p-1} \varepsilon^{-p} ,\\
	& \partial_t \partial_\varepsilon H = \left(\frac{\alpha w^{p-1}g^{p-1}}{\varepsilon^{p}(p-1)(2^{1/\alpha}+t)^{\alpha+1}} - \frac{(p-1)g^{2p-2}}{(p+1) \varepsilon^p} \right) H ,\\
	& \partial_\varepsilon^2 H = \left(pg^{2p-2}\varepsilon^{-2p} - p g^{p-1} \varepsilon^{-p-1} \right) H \leq 0.
	\end{aligned}\right.
\end{equation}

\end{Corollary}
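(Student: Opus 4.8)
The plan is to read off all four identities directly from the product structure $H=wg$, feeding in the first- and second-order derivatives of $w$ and $g$ already recorded in \eqref{eq.rem7}, \eqref{eq.relcr7}, \eqref{eq.rem34} and \eqref{eq.pade1}. Everything reduces to the product and chain rules, so the only real care needed is in the bookkeeping of the exponents of $w$, $g$, $\varepsilon$ and $(2^{1/\alpha}+t)$.

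First I would treat $\partial_t H$ and $\partial_\varepsilon H$, which are immediate because $w$ is independent of $\varepsilon$. Writing $\partial_t H=\dot w\, g+w\,\partial_t g$ and substituting $\dot w=\frac{\alpha}{(p-1)(2^{1/\alpha}+t)^{\alpha+1}}w^p$ together with $\partial_t g=-\frac{p-1}{p(p+1)}g^p$, then factoring out $H=wg$, gives the first line of \eqref{eq.rem61}; and $\partial_\varepsilon H=w\,\partial_\varepsilon g=w\,g^p\varepsilon^{-p}=H g^{p-1}\varepsilon^{-p}$ gives the second line.

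For the mixed derivative I would differentiate the identity $\partial_\varepsilon H=w\,g^p\varepsilon^{-p}$ in $t$, obtaining $\partial_t\partial_\varepsilon H=\dot w\,g^p\varepsilon^{-p}+w\,p\,g^{p-1}(\partial_t g)\,\varepsilon^{-p}$; the substitution $\partial_t g=-\frac{p-1}{p(p+1)}g^p$ collapses the second term to $-\frac{p-1}{p+1}w\,g^{2p-1}\varepsilon^{-p}$, and after inserting $\dot w$ in the first term and extracting the common factor $H=wg$ one recovers the third line of \eqref{eq.rem61}. For $\partial_\varepsilon^2 H$ I would instead differentiate $\partial_\varepsilon H=w\,g^p\varepsilon^{-p}$ in $\varepsilon$: using $\partial_\varepsilon g=g^p\varepsilon^{-p}$ this produces $w\bigl(p\,g^{p-1}(\partial_\varepsilon g)\,\varepsilon^{-p}-p\,g^p\varepsilon^{-p-1}\bigr)=w\bigl(p\,g^{2p-1}\varepsilon^{-2p}-p\,g^p\varepsilon^{-p-1}\bigr)$, which equals $\bigl(p\,g^{2p-2}\varepsilon^{-2p}-p\,g^{p-1}\varepsilon^{-p-1}\bigr)H$ once $H=wg$ is pulled out.

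The only step that is not pure bookkeeping is the sign assertion $\partial_\varepsilon^2 H\le 0$. The bracket $p\,g^{2p-2}\varepsilon^{-2p}-p\,g^{p-1}\varepsilon^{-p-1}$ is non-positive exactly when $g^{p-1}\varepsilon^{-p}\le\varepsilon^{-1}$, i.e.\ when $g(t,\varepsilon)\le\varepsilon$, and this is read off the closed form \eqref{eq.relcr7}: since $(p-1)^2\varepsilon^{p-1}t\ge0$ we get $g(t,\varepsilon)^{p-1}\le\frac{p(p+1)\varepsilon^{p-1}}{p(p+1)}=\varepsilon^{p-1}$ for every $t\ge 0$ (equivalently, $g$ is non-increasing in $t$ with $g(0,\varepsilon)=\varepsilon$). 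Since $H>0$, this gives $\partial_\varepsilon^2 H\le 0$ and finishes the proof. I do not anticipate a genuine obstacle here — the corollary is a direct consequence of the Remark — so there is no hard part beyond carrying out the simplifications accurately and invoking the elementary monotonicity $g(t,\varepsilon)\le\varepsilon$ for the last sign.
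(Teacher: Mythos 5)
Your proposal is correct and follows essentially the same route as the paper: all four identities are obtained by the product and chain rules from the derivative formulas of $w$ and $g$ recorded in the preceding Remark, and the sign of $\partial_\varepsilon^2 H$ is deduced from $g(t,\varepsilon)\leq\varepsilon$, which is exactly the one observation the paper's proof makes explicit. Your exponent bookkeeping checks out in every line.
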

\begin{proof}
The last estimate holds because $g(t,\varepsilon) \leq \varepsilon$.
\end{proof}

\section{Linear Solutions to Damped Wave Equation}
\label{Section:Linear}
In this section,
we collect the estimate for kernel parts of $S(t)$ and its derivatives.
For simplicity, we denote
	\[
	\omega = \sqrt{t^2-y^2}.
	\]

We discuss a pointwise lower bound of $I_0$ by using the representation
	\begin{align}
	I_{0} (x)
	= \frac{1}{\pi} \int_{- x}^{x} \frac{e^{\eta}}{\sqrt{x^{2} - \eta^{2}}} d \eta.
	\label{eq:0-mBessel}
	\end{align}
For the detail of \eqref{eq:0-mBessel}, see \cite[8.431]{GR07}.
\begin{Lemma}
\label{Lemma:0-mBessel}
	\begin{align*}
	I_{0} (x) \geq \begin{cases}
		1 &\quad \text{if} \quad 0< x \leq 1, \\
		\frac{5}{6 \pi} \frac{e^{x}}{\sqrt{x}} &\quad \text{if} \quad x \geq 1.
	\end{cases}
	\end{align*}
\end{Lemma}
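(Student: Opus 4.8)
The plan is to exploit the integral representation \eqref{eq:0-mBessel} directly, splitting into the two regimes $0<x\le 1$ and $x\ge 1$. For the first regime, the estimate $I_0(x)\ge 1$ follows by noting that $e^\eta\ge 1+\eta$ and that the odd part $\eta/\sqrt{x^2-\eta^2}$ integrates to zero over the symmetric interval $(-x,x)$; hence $I_0(x)\ge \frac1\pi\int_{-x}^x (x^2-\eta^2)^{-1/2}\,d\eta = \frac1\pi\cdot\pi = 1$, valid for every $x>0$ (and in particular on $0<x\le 1$). Actually this already gives $I_0(x)\ge 1$ for all $x>0$, which we will reuse.

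For the second regime $x\ge 1$, the idea is to capture the exponential growth by keeping only the contribution near the right endpoint $\eta=x$, where $e^\eta$ is largest. Substitute $\eta = x - s$, so that $x^2-\eta^2 = s(2x-s)$ and
\[
I_0(x) = \frac{e^x}{\pi}\int_0^{2x} \frac{e^{-s}}{\sqrt{s(2x-s)}}\,ds
\ge \frac{e^x}{\pi}\int_0^{1} \frac{e^{-s}}{\sqrt{s(2x-s)}}\,ds.
\]
On $0\le s\le 1\le x$ we have $2x-s\le 2x$, so $\sqrt{s(2x-s)}\le \sqrt{2x}\,\sqrt s$, giving
\[
I_0(x)\ge \frac{e^x}{\pi\sqrt{2x}}\int_0^1 \frac{e^{-s}}{\sqrt s}\,ds.
\]
It remains to evaluate or bound below the constant $c_0 := \int_0^1 s^{-1/2}e^{-s}\,ds$. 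Using $e^{-s}\ge 1-s$ on $[0,1]$ gives $c_0 \ge \int_0^1 (s^{-1/2}-s^{1/2})\,ds = 2 - \tfrac23 = \tfrac43$, whence $I_0(x)\ge \frac{4}{3\pi\sqrt{2x}}\,e^x = \frac{4}{3\sqrt2\,\pi}\,\frac{e^x}{\sqrt x} = \frac{2\sqrt2}{3\pi}\,\frac{e^x}{\sqrt x}$. Since $\frac{2\sqrt2}{3} = \frac{2.828\ldots}{3}\approx 0.943 > \frac56 = 0.833\ldots$, the claimed bound $I_0(x)\ge \frac{5}{6\pi}\,\frac{e^x}{\sqrt x}$ follows a fortiori.

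The only mildly delicate point is the choice of how much of the integral near $\eta=x$ to retain: truncating at $s\le 1$ is convenient because it keeps $e^{-s}$ of order one while allowing the crude bound $2x-s\le 2x$, and the resulting numerical constant $\frac{2\sqrt2}{3\pi}$ comfortably beats $\frac{5}{6\pi}$, leaving slack for the elementary lower bound $e^{-s}\ge 1-s$. No sharper analysis of the Bessel function is needed; everything reduces to the one-line substitution and two applications of $e^{x}\ge 1+x$.
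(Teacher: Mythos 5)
Your proof is correct, and both parts check out: the oddness trick with $e^\eta\ge 1+\eta$ gives $I_0(x)\ge\frac1\pi\int_{-x}^x(x^2-\eta^2)^{-1/2}\,d\eta=1$ for all $x>0$, and the substitution $\eta=x-s$ with the truncation to $s\in[0,1]$, the bound $2x-s\le 2x$, and $e^{-s}\ge 1-s$ yield the constant $\frac{2\sqrt2}{3\pi}>\frac{5}{6\pi}$, so the stated bound follows a fortiori. The strategy is the same as the paper's in spirit --- localize the integral representation \eqref{eq:0-mBessel} near the endpoint $\eta=x$ where the exponential peaks, bound the algebraic factor crudely, and finish with the elementary inequality $e^{-u}\ge 1-u$ --- but the execution differs: the paper first passes to the trigonometric form $I_0(x)=\frac1\pi\int_0^\pi e^{x\cos\theta}\,d\theta$, uses $\cos\theta\ge 1-\theta^2/2$, truncates at $\theta\le 1/\sqrt x$, and rescales to a Gaussian-type integral, which is exactly how the constant $\frac{5}{6\pi}$ arises there; your direct substitution avoids the trigonometric detour and the inequality for $\cos\theta$ entirely, handles the regime $0<x\le1$ by a different (equally elementary) device, and happens to produce a slightly sharper constant. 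Nothing is lost either way; your version is marginally more self-contained, while the paper's version is the standard route that generalizes to asymptotics of $I_\nu$.
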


\begin{proof}
	Let $x >0$.
	Changing variables in \eqref{eq:0-mBessel} as $\eta = x \cos \theta$, we have
	\begin{align*}
		I_{0} \left( x \right)
		= \frac{1}{\pi} \int_{0}^{\pi} e^{x \cos \theta} d \theta
		= \frac{2}{\pi} \int_{0}^{\frac{\pi}{2}} \cosh \left( x \cos \theta \right) d \theta.
	\end{align*}
	If $x \leq 1$, then we get
	\begin{align*}
		I_{0} \left( x \right) &\geq \frac{2}{\pi} \int_{0}^{\frac{\pi}{2}} 1d \theta =1.
	\end{align*}
	We next consider the case where $x \geq 1$.
	We note that
	\begin{align*}
		1- \frac{\theta^{2}}{2} \leq \cos \theta \leq 1- \frac{\theta^{2}}{\pi}
	\end{align*}
	for any $\theta \in \left[ 0, \pi /2 \right]$.
	By using the above inequalities, we obtain
	\begin{align*}
		I_{0} \left( x \right)
		&\geq \frac{1}{\pi} \int_{0}^{\frac{\pi}{2}} e^{x \cos \theta} d \theta \\
		&\geq \frac{1}{\pi} e^{x} \int_{0}^{\frac{1}{\sqrt{x}}} e^{- \frac{x \theta^{2}}{2}} d \theta \\
		&= \frac{\sqrt{2}}{\pi} \frac{e^{x}}{\sqrt{x}} \int_{0}^{\frac{1}{\sqrt{2}}} e^{- \rho^{2}} d \rho \\
		&\geq \frac{\sqrt{2}}{\pi} \frac{e^{x}}{\sqrt{x}} \int_{0}^{\frac{1}{\sqrt{2}}} \left( 1- \rho^{2} \right) d \rho \\
		&= \frac{5}{6 \pi} \frac{e^{x}}{\sqrt{x}}.
	\end{align*}
	This completes the proof of Lemma \ref{Lemma:0-mBessel}.
\end{proof}

Next, we show pointwise upper bounds
of $\partial_t S(t)$ and $\partial_t^2 S(t)$.
We note $\partial_t S(t)$ and $\partial_{t}^2 S(t)$ are represented
as follows:
	\begin{align}
	\partial_t S(t) f(x)
	&= e^{-t/2} \frac{f(x+t)+f(x-t)}{2}
	+ \widetilde{\mathcal K}_1(t) \ast f(x),
	\label{eq:St}\\
	\partial_t^2 S(t) f(x)
	&= e^{-t/2} \frac{f'(x+t)-f'(x-t)}{2}
	\nonumber\\
	&+ e^{-t/2} \bigg( \frac{t}{16} - \frac 1 2 \bigg)
	(f(x+t)+f(x-t))
	+ \widetilde{\mathcal K}_2(t) \ast f(x),
	\label{eq:Stt}
	\end{align}
where
	\begin{align*}
	\mathcal K_1(t,y)&= \frac 1 4 e^{-t/2}
	\bigg( \frac{t}{\omega} I_1(\frac \omega 2)
	- I_0(\frac \omega 2) \bigg),\\
	\mathcal K_2(t,y)&= e^{-t/2}
	\bigg(
	\frac{t^2}{16 \omega^2} I_2 (\frac \omega 2)
	- \bigg( \frac{t}{4 \omega} + \frac{y^2}{4 \omega ^3} \bigg) I_1(\frac \omega 2)
	+ \bigg( \frac 1 8 + \frac{t^2}{16 \omega^2} \bigg) I_0 (\frac \omega 2)
	\bigg),
	\end{align*}
$I_1$ and $I_2$ are first kind modified Bessel functions of first and second order,
and
	\[
	\widetilde{\mathcal K_j}(t,y)
	= \begin{cases}
	\mathcal K_j(t,y) & \mathrm{if} \quad |y| \leq t,\\
	0 & \mathrm{if} \quad |y| \geq t
	\end{cases}
	\]
for $j=1,2$.
For the detail of $\mathcal K_1$ and $\mathcal K_2$,
we refer the reader \cite{FG04o} and reference therein,
for example.

Then we have the following estimates:
\begin{Lemma}
\label{Lemma:LinearEstimate1}
Let $\phi \in L^1 \cap L^\infty (\mathbb R; [0,\infty)) $
satisfy \eqref{eq:IC2} -- \eqref{eq:IC5}.
Then $\partial_t S(t) \phi$ enjoys the pointwise estimate
	\[
	| \partial_t S(t) \phi(x)|
	\leq C_{\sigma} t^{-2(1-\sigma)} S(t) \phi(x)
	\]
for any $t \geq 2$, $x \in \mathbb R$, and $\sigma \in (1/2,1)$,
where $C_{\sigma}$ is a positive constant depending on $\sigma$
and $\phi$.
\end{Lemma}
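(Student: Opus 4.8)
The plan is to use the representation \eqref{eq:St}, which writes $\partial_t S(t)\phi(x) = A_t(x) + (\widetilde{\mathcal K}_1(t)\ast\phi)(x)$ with the exponentially damped boundary term $A_t(x) = e^{-t/2}\tfrac{\phi(x+t)+\phi(x-t)}{2}$, and to bound $A_t$ and $\widetilde{\mathcal K}_1(t)\ast\phi$ separately by $C_\sigma t^{-2(1-\sigma)}\,S(t)\phi$. I first prepare two lower bounds. For $S(t)\phi$: restricting the defining integral of $S(t)\phi(x)$ to $|y|\le\sqrt t$, using Lemma \ref{Lemma:0-mBessel} together with $e^{-t/2}e^{\sqrt{t^2-y^2}/2}=e^{-(t-\sqrt{t^2-y^2})/2}\ge e^{-1/2}$ there, and then \eqref{eq:IC2}, one obtains
\[
S(t)\phi(x)\ \ge\ \frac{c}{\sqrt t}\int_{x-\sqrt t}^{x+\sqrt t}\phi(z)\,dz\ \ge\ \frac{c'}{\sqrt t}\,\phi(x),\qquad t\ge 2 .
\]
For $\phi$ itself: letting $|y|\to 2|x|$ in \eqref{eq:IC3} and using continuity of $\phi$ gives $\phi(x)\le C\,\phi(2x)$, hence $\phi(2^n x_0)\ge C^{-n}\phi(x_0)$ by iteration; writing an arbitrary point of modulus in $[2^n,2^{n+1})$ as $2^n v$ with $|v|\in[1,2)$ and using positivity of $\phi$ on $\{1\le|v|\le 2\}$ and on $\{|z|\le1\}$, this yields a polynomial lower bound $\phi(x)\ge c_\phi\langle x\rangle^{-N_\phi}$ with $N_\phi=\log_2 C$ depending only on the constant in \eqref{eq:IC3}.

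The core is a pointwise estimate for $\mathcal K_1$. Since $\mathcal K_1(t,y)=\partial_t\big(\tfrac{e^{-t/2}}{2}I_0(\tfrac\omega2)\big)$, writing $z=\omega/2$ one has $\tfrac t\omega I_1(z)-I_0(z)=\tfrac{t-\omega}{\omega}I_1(z)-\big(I_0(z)-I_1(z)\big)$, so
\[
|\mathcal K_1(t,y)|\ \le\ \frac{e^{-t/2}}{4}\Big(\frac{t-\omega}{\omega}I_0(\tfrac\omega2)+\big(I_0(\tfrac\omega2)-I_1(\tfrac\omega2)\big)\Big).
\]
Here I use the elementary Bessel inequalities $I_0(z)\le e^z$ and $0\le I_0(z)-I_1(z)\le z^{-1}I_0(z)$ for $z\ge1$; the latter follows from a barrier argument for $S=1-I_1/I_0$, which obeys the Riccati relation $S'=S^2-2S-S/z+1/z$, so that $S'=-1/z<-1/z^2$ whenever $S=1/z$ and $z>1$, and since $S(1)<1$ the function $S-1/z$ stays negative on $[1,\infty)$. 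Splitting the convolution at $|y|=t^\sigma$: on $\{|y|\le t^\sigma\}$ one has $\omega\ge t/\sqrt2$ for $t$ large, hence $\tfrac{t-\omega}{\omega}\le\sqrt2\,y^2/t^2\le\sqrt2\,t^{-2(1-\sigma)}$ and (using $2\sigma-1>0$, so $t^{-1}\le t^{-2(1-\sigma)}$) $I_0(\tfrac\omega2)-I_1(\tfrac\omega2)\le C t^{-1}I_0(\tfrac\omega2)\le C t^{-2(1-\sigma)}I_0(\tfrac\omega2)$, giving
\[
|\mathcal K_1(t,y)|\ \le\ C\,t^{-2(1-\sigma)}\cdot\tfrac{e^{-t/2}}{2}I_0(\tfrac\omega2)\qquad(|y|\le t^\sigma);
\]
on $\{t^\sigma<|y|\le t\}$ one has $t-\omega\ge\tfrac12 t^{2\sigma-1}$, and crude pointwise bounds give $|\mathcal K_1(t,y)|\le t\,e^{-t^{2\sigma-1}/4}$ for $t$ large. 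Finally, for all $t\ge2$ and $|y|\le t$ one has the crude ratio bound $|\mathcal K_1(t,y)|\le C t\cdot\tfrac{e^{-t/2}}{2}I_0(\tfrac\omega2)$, and $\int_{|y|\le t}\tfrac{e^{-t/2}}{2}I_0(\tfrac\omega2)\,dy=1-e^{-t}$ (integrate the damped wave equation against the constant $1$).

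Now fix $\sigma\in(1/2,1)$ and split into two regimes. If $|x|>2t$, then every $z\in[x-t,x+t]$ and $z=x\pm t$ satisfies $|x|/2<|z|<2|x|$, so \eqref{eq:IC3} gives $C^{-1}\phi(x)\le\phi(z)\le C\phi(x)$ there; hence $A_t(x)\le C e^{-t/2}\phi(x)$ and $|(\widetilde{\mathcal K}_1(t)\ast\phi)(x)|\le C\phi(x)\int_{|y|\le t}|\mathcal K_1(t,y)|\,dy$, while the two kernel bounds integrate (using the mass identity above, and $2t^2 e^{-t^{2\sigma-1}/4}\le t^{-2(1-\sigma)}$ for $t$ large) to $\int_{|y|\le t}|\mathcal K_1(t,y)|\,dy\le C_\sigma t^{-2(1-\sigma)}$; since $S(t)\phi(x)\ge C^{-1}\phi(x)(1-e^{-t})\ge c\,\phi(x)$, this regime follows. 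If $|x|\le2t$, write $\widetilde{\mathcal K}_1(t)\ast\phi=B_1+B_2$ with $B_1$ over $|y|\le t^\sigma$ and $B_2$ over $t^\sigma<|y|\le t$: the near kernel bound gives $|B_1(x)|\le C t^{-2(1-\sigma)}\tfrac{e^{-t/2}}{2}\int_{|y|\le t}I_0(\tfrac\omega2)\phi(x-y)\,dy=C t^{-2(1-\sigma)}S(t)\phi(x)$; for $B_2$ and $A_t$ we use $|B_2(x)|\le t\,e^{-t^{2\sigma-1}/4}\int_{x-t}^{x+t}\phi\le t\,e^{-t^{2\sigma-1}/4}\|\phi\|_{L^1}$ and $A_t(x)\le e^{-t/2}\|\phi\|_{L^\infty}$, and divide by $S(t)\phi(x)\ge c'\,t^{-1/2}\phi(x)\ge c' c_\phi\,t^{-1/2}\langle x\rangle^{-N_\phi}\ge c''_\phi\,t^{-1/2-N_\phi}$ (the last step using $|x|\le2t$); since $t^{N_\phi+3/2}e^{-t^{2\sigma-1}/4}\to0$ and $t^{N_\phi+1/2}e^{-t/2}\to0$, both quotients are $\le C_{\phi,\sigma}t^{-2(1-\sigma)}$ once $t$ exceeds a $\sigma$-dependent threshold $T_\sigma$. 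The residual range $2\le t\le T_\sigma$ is handled uniformly in $x$ by the crude bound $|\partial_t S(t)\phi(x)|\le C t\,S(t)\phi(x)$, valid for all $t\ge2$ because $|\mathcal K_1(t,y)|\le C t\,\tfrac{e^{-t/2}}{2}I_0(\tfrac\omega2)$ and $\tfrac{e^{-t/2}}{2}\phi(x\pm t)\le C t\,S(t)\phi(x)$ (the latter again from Lemma \ref{Lemma:0-mBessel} and \eqref{eq:IC2} applied near the light cone).

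I expect the main obstacle to be the kernel estimate on $\{|y|\le t^\sigma\}$: producing the gain $t^{-2(1-\sigma)}$ in $\mathcal K_1$ requires genuine cancellation in $\tfrac t\omega I_1(\tfrac\omega2)-I_0(\tfrac\omega2)$ — the crude bound only yields $|\mathcal K_1|\lesssim t\cdot\tfrac{e^{-t/2}}{2}I_0(\tfrac\omega2)$, which is useless — and this forces the sharp inequality $I_0(z)-I_1(z)\le z^{-1}I_0(z)$ for $z\ge1$. A related point is that the exponentially small factor $e^{-t^{2\sigma-1}/4}$ from the far part must beat the possibly large ratio $\|\phi\|_{L^1}/\phi(x)$, which is feasible only after restricting to $|x|\le2t$ and invoking the polynomial lower bound $\phi\gtrsim\langle\cdot\rangle^{-N_\phi}$ that \eqref{eq:IC3} secretly encodes. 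The decay rate $t^{-2(1-\sigma)}$, weaker than the heat-type $t^{-1}$, is exactly the price of the cut-off at $|y|=t^\sigma$ with $\sigma>1/2$.
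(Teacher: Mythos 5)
Your proof is correct, and while it follows the same overall architecture as the paper's --- the lower bound $S(t)\phi(x)\gtrsim t^{-1/2}\phi(x)$ from Lemma \ref{Lemma:0-mBessel} and \eqref{eq:IC2}, the split of \eqref{eq:St} into boundary term plus kernel, the cut of the convolution at $|y|=t^{\sigma}$, and the dichotomy $|x|\lessgtr 2t$ with \eqref{eq:IC3} handling the outer regime --- it differs in the key technical step. Where the paper extracts the gain on $D_2$ by citing the Marcati--Nishihara bound \eqref{eq:K1} and then converting $\omega^{-1/2}e^{(\omega-t)/2}$ back into $S(t)\phi$ via \eqref{eq:Slower}, you prove the cancellation in $\tfrac{t}{\omega}I_1-I_0$ from scratch: the rearrangement $\tfrac{t-\omega}{\omega}I_1-(I_0-I_1)$ together with the inequality $I_0(z)-I_1(z)\le z^{-1}I_0(z)$ for $z\ge 1$, which you establish by a Riccati barrier argument for $1-I_1/I_0$ (I checked the Riccati relation and the sign at a crossing point; it is correct), and you then compare $\mathcal K_1$ directly to the kernel $\tfrac{e^{-t/2}}{2}I_0(\tfrac{\omega}{2})$ of $S(t)$, using the mass identity $S(t)1=1-e^{-t}$. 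This makes the argument self-contained, and the uniform bound $I_1(z)\le \tfrac z2 I_0(z)$ lets you absorb the $t/\omega$ singularity without separating off the paper's region $D_1=\{\omega\le 1\}$. Two further deviations are harmless: you derive a polynomial lower bound $\phi\gtrsim\langle x\rangle^{-N_\phi}$ from \eqref{eq:IC3}, whereas the paper simply invokes the assumed \eqref{eq:IC5} (your derivation uses continuity of $\phi$, which the lemma's $L^1\cap L^\infty$ hypothesis does not literally provide, though it holds in the setting of Theorem \ref{Theorem:Main}; invoking \eqref{eq:IC5} directly would avoid the issue); and you explicitly dispatch the compact range $2\le t\le T_\sigma$ with the crude bound $|\partial_t S(t)\phi|\le Ct\,S(t)\phi$, a point the paper leaves implicit. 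What your route buys is independence from the external kernel asymptotics of \cite{MN03}; what the paper's buys is brevity.
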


\begin{proof}
We first note that
Lemma \ref{Lemma:0-mBessel} and the assumption \eqref{eq:IC2} imply
that the following estimates hold:
	\begin{align}
	S(t) \phi(x)
	\geq \int_{-1}^1 e^{\frac{\omega-t}{2}}
	\frac{\phi(x-y)}{\sqrt{\langle \omega \rangle}} dy
	\geq C t^{-\frac{1}{2}} \phi(x).
	\label{eq:Slower}
	\end{align}

We claim that
the estimate
	\[
	e^{-t/2} \frac{\phi(x+t)+\phi(x-t)}{2}
	\leq C e^{-t/4} \phi(x)
	\]
holds.
Indeed, if $|x| \leq 2 t$, then
the estimates
	\[
	e^{-t/2} \frac{\phi(x+t)+\phi(x-t)}{2}
	\leq C e^{-t/2}
	\leq C e^{-t/4} \phi(t)
	\leq C e^{-t/4} \phi(x).
	\]
follows from \eqref{eq:IC5} and \eqref{eq:IC3}.
Moreover, if $|x| \geq 2 t$, \eqref{eq:IC3} implies that
	\[
	e^{-t/2} \frac{\phi(x+t)+\phi(x-t)}{2}
	\leq e^{-t/2} \phi(x).
	\]
This and \eqref{eq:Slower} imply the assertion
for the first term in \eqref{eq:St}.

Now we show
	\[
	\int_{-t}^t \mathcal K_1(t,y) \phi(x-y) dy
	\leq C t^{-\frac{1}{2}-2(1-\sigma)} \phi(x).
	\]
We separate the integral domain into the cases:
	\begin{align*}
	D_1 &= \{ y \in [-t,t] \mid \ \omega \leq 1 \},\\
	D_2 &= \{ y \in [-t,t] \mid \ \omega \geq 1,\ |y| \leq t^\sigma \},\\
	D_3 &= \{ y \in [-t,t] \mid \ \omega \geq 1,\ |y| \geq t^\sigma \}.
	\end{align*}

On $D_2$,
we use the estimate
	\begin{align}
	\mathcal K_1(t,y)
	\leq C \omega^{-3/2} e^{\frac{\omega-t}{2}}
	+ C \bigg( \frac{t}{\omega}-1 \bigg) \omega^{-1/2} e^{\frac{\omega-t}{2}}.
	\label{eq:K1}
	\end{align}
For the detail of \eqref{eq:K1},
we refer \cite[Proof of Proposition 2.1]{MN03} for instance.
We note that the estimate
	\[
	\frac{t}{\omega}-1
	= \frac{y^2}{\omega(t+\omega)}
	\leq t^{-2(1-\sigma)}.
	\]
holds on $D_2$ because $\omega \sim t$ on $D_2$.
Then the estimate above, \eqref{eq:K1}, and \eqref{eq:Slower} imply that
the following estimates hold on $D_2$:
	\[
	\int_{D_2} \mathcal K_1(t,y) \phi(x-y) dy
	\leq C t^{-2(1-\sigma)}
	\int_{D_2} \frac{e^{(\omega-t)/2}}{\omega^{1/2}} \phi(x-y) dy
	\leq C t^{-2(1-\sigma)} S(t) \phi(x).
	\]

To estimate the integrals on $D_1$ and $D_3$,
we separate the cases where $|x| \leq 2 t$ and $|x| \geq 2 t$.

Assume $|x| \leq 2 t$.
The estimate $\mathcal K_1(t,y) \leq C e^{-t/2}$
holds on $D_1$ from the analyticity of $\mathcal K_1(t,y)$
with respect to $\omega$.
Then \eqref{eq:IC5} and \eqref{eq:IC3} again imply the estimate
	\[
	\int_{D_1} \mathcal K_1(t,y) \phi(x-y) dy
	\leq C t e^{-t/2}
	\leq C t^{-\frac{1}{2}-2(1-\sigma)} \phi(x).
	\]
On $D_3$, the estimate $e^{\frac{\omega-t}{2}} \leq C e^{-t^{2\sigma-1}/4}$
holds.
Therefore, we have
	\[
	\int_{D_3} \mathcal K_1(t,y) \phi(x-y) dy
	\leq C e^{-t^{2\sigma-1}/4}
	\]
and this implies the assertion on $D_3$.

In the case where $|x| \geq 2 t$,
we estimate the integral on $D_1$ and $D_3$ similarly
by using the estimate
	\[
	\phi(x-y) \leq C \phi(x)
	\]
following from $|y| \leq t$ and \eqref{eq:IC3}.
\end{proof}

\begin{Lemma}
\label{Lemma:LinearEstimate2}
Let $\phi $ satisfy the in Theorem \ref{Theorem:Main}.
Then $\partial_t^2 S(t) \phi$ enjoys the pointwise estimate
	\[
	| \partial_t^2 S(t) \phi(x)|
	\leq C_{\sigma} t^{-4(1-\sigma)} S(t) \phi(x)
	\]
for any $t \geq 2$, $x \in \mathbb R$, and $\sigma \in (1/2,1)$,
where $C_{\sigma}$ is a positive constant depending on $\sigma$.
\end{Lemma}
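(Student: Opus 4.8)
The plan is to follow the scheme of the proof of Lemma~\ref{Lemma:LinearEstimate1}, starting now from the representation \eqref{eq:Stt}, which writes $\partial_t^2 S(t)\phi$ as the sum of a boundary term involving $\phi'$, the boundary term $e^{-t/2}(t/16-1/2)(\phi(x+t)+\phi(x-t))$, and the convolution $\widetilde{\mathcal K}_2(t)\ast\phi$. The two boundary terms are disposed of first: by \eqref{eq:IC4} one has $|\phi'(x\pm t)|\le C\phi(x\pm t)$, so both are dominated by $C(1+t)e^{-t/2}(\phi(x+t)+\phi(x-t))$, and the argument used in the proof of Lemma~\ref{Lemma:LinearEstimate1} (splitting into $|x|\le 2t$ and $|x|\ge 2t$, using \eqref{eq:IC3}, \eqref{eq:IC5}, and the lower bound \eqref{eq:Slower}) shows that their contribution is $\le C_\sigma (1+t)^2 e^{-t/2}\phi(x)\le C_\sigma t^{-4(1-\sigma)}S(t)\phi(x)$ for $t\ge 2$, the polynomial factor being harmlessly absorbed by the exponential.

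It remains to estimate $\int_{-t}^{t}\mathcal K_2(t,y)\phi(x-y)\,dy$, for which I would use the same decomposition $[-t,t]=D_1\cup D_2\cup D_3$ as for $\mathcal K_1$. It is convenient first to rewrite, using $t^2=\omega^2+y^2$ and the recurrence $I_2(\omega/2)=I_0(\omega/2)-\tfrac4\omega I_1(\omega/2)$,
\begin{align*}
\mathcal K_2(t,y)=e^{-t/2}\left(\frac{1+2s}{16}\,I_2(\tfrac\omega2)-\frac{t}{4\omega}\,I_1(\tfrac\omega2)+\frac{3}{16}\,I_0(\tfrac\omega2)\right),\qquad s=\frac{y^2}{\omega^2}.
\end{align*}
On $D_1$ (where $\omega\le1$) this form makes the apparent singularities vanish: $I_2(\omega/2)=O(\omega^2)$ and $I_1(\omega/2)/\omega$ is analytic in $\omega$, so $\tfrac{t}{4\omega}I_1(\omega/2)=\tfrac t4\cdot I_1(\omega/2)/\omega$ is bounded by $C(1+t)$ and $|\mathcal K_2(t,y)|\le C(1+t)^2 e^{-t/2}$ on $D_1$, a contribution treated exactly as the boundary terms above. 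On $D_3$ (where $\omega\ge1$, $|y|\ge t^\sigma$) one has $\omega\le t-\tfrac12 t^{2\sigma-1}$, hence $e^{(\omega-t)/2}\le e^{-t^{2\sigma-1}/4}$, while each coefficient above is $\le C(1+t)^2$ and $I_j(\omega/2)\le Ce^{\omega/2}$; thus $|\mathcal K_2(t,y)|\le C(1+t)^2 e^{-t^{2\sigma-1}/4}$, again negligible after the $|x|\le 2t$, $|x|\ge 2t$ split.

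The heart of the matter is $D_2$ (where $\omega\sim t$ and $|y|\le t^\sigma$), for which I would establish the pointwise bound
\begin{align*}
\mathcal K_2(t,y)\le C e^{(\omega-t)/2}\left(\omega^{-5/2}+\Big(\tfrac t\omega-1\Big)\omega^{-3/2}+\Big(\tfrac t\omega-1\Big)^2\omega^{-1/2}\right),
\end{align*}
the second-order analogue of \eqref{eq:K1} (cf.\ \cite[Proof of Proposition~2.1]{MN03}). Setting $u=\tfrac t\omega-1$ and noting $s=u(u+2)$ with $u+2\le C$ on $D_2$, the parenthesis above reduces (again via $I_2(\omega/2)=I_0(\omega/2)-\tfrac4\omega I_1(\omega/2)$) to $\tfrac14\bigl(I_0(\tfrac\omega2)-I_1(\tfrac\omega2)-\tfrac1\omega I_1(\tfrac\omega2)\bigr)+\tfrac14 u\bigl(I_0(\tfrac\omega2)-I_1(\tfrac\omega2)\bigr)$ plus terms bounded by $C\bigl(u^2\omega^{-1/2}+u\omega^{-3/2}\bigr)e^{\omega/2}$; the two cancellations $I_0(\omega/2)-I_1(\omega/2)=O(\omega^{-3/2}e^{\omega/2})$ and $I_0(\omega/2)-(1+\tfrac1\omega)I_1(\omega/2)=O(\omega^{-5/2}e^{\omega/2})$, read off from the large-argument asymptotics of $I_\nu$ and extended to all $\omega\ge1$ by compactness on bounded intervals, then give the displayed bound. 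Since $u=\tfrac{y^2}{\omega(t+\omega)}\le C t^{-2(1-\sigma)}$ and $\omega\sim t$ on $D_2$, and since $\sigma>1/2$ forces $t^{-2}\le t^{-4(1-\sigma)}$ and $t^{-1-2(1-\sigma)}\le t^{-4(1-\sigma)}$, each of the three terms is $\le C t^{-4(1-\sigma)}\omega^{-1/2}e^{(\omega-t)/2}$, so that $\int_{D_2}\mathcal K_2(t,y)\phi(x-y)\,dy\le C t^{-4(1-\sigma)}S(t)\phi(x)$ by Lemma~\ref{Lemma:0-mBessel}, exactly as in the proof of Lemma~\ref{Lemma:LinearEstimate1}. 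I expect the main obstacle to be precisely the derivation of this $D_2$ bound: it requires tracking the cancellations among the three Bessel contributions in $\mathcal K_2$ one order further than in \eqref{eq:K1}; once it is in hand, the remaining estimates are routine bookkeeping parallel to Lemma~\ref{Lemma:LinearEstimate1}.
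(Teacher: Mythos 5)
Your proposal is correct and follows essentially the same route as the paper: the same splitting of \eqref{eq:Stt} into boundary terms and the convolution with $\mathcal K_2$, the same decomposition $D_1\cup D_2\cup D_3$, and a $D_2$ kernel bound that (since $\tfrac{t}{\omega}-1=\tfrac{y^2}{\omega(t+\omega)}$ and $\omega\sim t$ there) is exactly the paper's estimate \eqref{eq:K4}, followed by the same exponent bookkeeping using $\sigma>1/2$. The only difference is that you actually derive the $D_2$ bound from the Bessel recurrence and the two cancellations in the large-argument asymptotics, whereas the paper simply asserts that \eqref{eq:K4} is obtained like \eqref{eq:K1} and refers to \cite{MN03}.
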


\begin{proof}
We note that the estimate
	\[
	e^{-t/2} \bigg| \frac{\dot \phi (x+t)- \dot \phi(x-t)}{2} \bigg|
	+ e^{-t/2} \bigg( \frac{t}{16} - \frac 1 2 \bigg)
	(\phi(x+t)+\phi(x-t))
	\leq C e^{-t/4} \phi(x).
	\]
is shown similarly to the proof of Lemma \ref{Lemma:LinearEstimate1}
thanks to \eqref{eq:IC4}.

Next, we separate the integral domain into three cases
as in the proof of Lemma \ref{Lemma:LinearEstimate1}.
The estimate
	\[
	\int_{D_1 \cup D_3} \mathcal K_2(t,y) \phi(x-y) dy
	\leq C t^{-\frac{1}{2}-4(1-\sigma)} \phi(x).
	\]
follows similarly to the proof of Lemma \ref{Lemma:LinearEstimate1}.
In order to treat the integral on $D_2$,
we use the following estimate:
	\begin{align}
	\mathcal K_2(t,y)
	\leq C \bigg( \frac{1}{\omega^{2}} + \frac{y^2}{\omega^{3}}
	+ \frac{y^4}{\omega^{2}(t+\omega)^2} \bigg)
	\frac{e^{\frac{\omega-t}{2}}}{\sqrt \omega}
	\label{eq:K4}
	\end{align}
holding fro $\omega \geq 1$.
We note that \eqref{eq:K4} is similarly shown to \eqref{eq:K1}.
On $D_2$, we further estimate
	\[
	\bigg(
		\frac{1}{\omega^{2}}
		+ \frac{y^2}{\omega^{3}}
		+ \frac{y^4}{\omega^{2}(t+\omega)^2}
	\bigg)
	\leq C ( t^{-2} + t^{-3+2\sigma} + t^{-4(1-\sigma)} ).
	\]
Then the estimate above, \eqref{eq:K4}, and \eqref{eq:Slower} imply that
	\[
	\int_{D_2} \mathcal K_2(t,y) \phi(x-y) dy
	\leq C t^{-4(1-\sigma)} S(t) \phi(x)
	\]
because the relations $-4+4\sigma > -3+2\sigma$ and $-4+4\sigma > -2$
are equivalent to $\sigma > 1/2$.
Therefore, the assertion holds.
\end{proof}


\section{Proof of Main Statements}
\label{Section:Proof}
First, we give the definition of the supersolution to \eqref{eq:DW};
\begin{Definition}
A function $H^\ast \in C^2((0,\infty) \times \mathbb R) \cap C([0,\infty);L^1 \cap L^\infty)$  is called supersolution if
	\begin{align*}
	\begin{cases}
	\partial_t^2 H^\ast - \Delta H^\ast + \partial_t H^\ast + (H^\ast)^p \geq 0,
	&(t,x) \in (0,\infty) \times \mathbb R,\\
	H^\ast(0,x) \geq 0,
	&x \in \mathbb R,\\
	\partial_t H^\ast(0,x) + \frac 1 2 H^\ast(0,x) \geq 0,
	&x \in \mathbb R.
	\end{cases}
	\end{align*}
\end{Definition}
Indeed,
if such $H^\ast$ exists,
under initial condition
	\[
	0\leq u_0 \leq H^\ast(0),
	\quad 0 \leq u_1 + \frac 1 2 u_0
	\leq \partial_t H^\ast(0) + \frac 1 2 H^\ast(0),
	\]
the comparison principle holds:
the solution $u$ to \eqref{eq:DW} enjoys the pointwise estimate
	\[
	0 \leq u(t,x) \leq H^\ast(t,x)
	\]
for almost every $t>0$ and $x \in \mathbb R$.
For details, see Proposition \ref{Proposition:Comparison} in Appendix.

\subsection{Proof of Theorem \ref{Theorem:Main}}
In this subsection,
we construct a supersolution to \eqref{eq:DW}
and show the decay estimate of the solution $u$.

In order to construct a supersolution,
we use \eqref{eq.defwg1} and replacing $\varepsilon$
by $f \in \mathbb{R}$ in \eqref{eq.defwg1} we put
	\[
	g(t,f)
	= \bigg( \frac{p (p+1) f^{p-1}}{(p-1)^2 f^{p-1} t + p^2+p} \bigg)^{1/(p-1)}
	\]
and $H(t,f) = w(t) g(t,f)$.
Further we define the pull back of $H$ with $f = u_L(t,x),$ i.e.
	\[
	H^\ast(t,x) = H(t, u_L(t,x))
	\]
where
$
u_L(t,x)
= \varepsilon S(t) \phi(x)
+ \varepsilon \partial_t S(t) \phi(x)
$.
We note that $u_L(t,x) >0$ holds for any $t>0$ and $x \in \mathbb R$.
We shall show that $H^\ast(t,x)$ is a supersolution
for $t \geq T_0$ with sufficiently large $T_0$ and with $\varepsilon$ small.

Direct computations imply the following identities,
\begin{Lemma}
	We have
	\begin{equation}
	\begin{aligned}
	& H^\ast = w g^*, \ \ \  \partial_t (H^\ast)
	 = (\partial_t H)^* + (\partial_f H)^* \partial_t u_L,	  \\
	 & \partial_t^2 H^\ast
	=(\partial^2_t H)^*+ 2(\partial_t\partial_f H)^* \partial_t u_L + (\partial_f^2 H)^* (\partial_t u_L)^2 + (\partial_f H)^* (\partial_t^2 u_L) \\
	& \partial_x H^\ast
	 = (\partial_f H)^* \partial_x u_L,\\
	&\Delta H^\ast =
 (\partial^2_f H)^*  (\partial_x u_L)^2 + (\partial_f H)^* \Delta u_L .
	\end{aligned}
\end{equation}
\end{Lemma}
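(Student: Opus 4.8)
The plan is simply to apply the chain rule to the composition $H^\ast(t,x) = H(t,u_L(t,x))$, viewing $H = H(t,f)$ as a function of two independent variables and recalling that the superscript $\ast$ means evaluation at $f = u_L(t,x)$. First I would record the setting in which the computation is legitimate: by positivity of the kernel of $S(t)$ and of $\phi$ one has $u_L(t,x) = \varepsilon S(t)\phi(x) + \varepsilon \partial_t S(t)\phi(x) > 0$ for $t>0$, so the argument $f = u_L$ stays in the region $f>0$ where, by Corollary~\ref{cor1} and \eqref{eq.rem61}, $H(t,f)$ is smooth in $(t,f)$; moreover $u_L$ possesses two derivatives in $(t,x)$ for $t>0$ thanks to the representations \eqref{eq:St}--\eqref{eq:Stt} and $\phi \in W^{1,\infty}$, so all compositions below are well defined.

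The identity $H^\ast = w g^\ast$ is immediate, since $H(t,f) = w(t)g(t,f)$ and $w$ does not depend on $f$, so evaluation at $f = u_L$ gives $H^\ast = w(t)g(t,u_L) = w g^\ast$. For $\partial_t H^\ast$, the variable $t$ enters $H^\ast$ both explicitly and through $u_L$, hence
\[
\partial_t H^\ast = (\partial_t H)(t,u_L) + (\partial_f H)(t,u_L)\,\partial_t u_L = (\partial_t H)^\ast + (\partial_f H)^\ast \partial_t u_L .
\]
Differentiating this in $t$ once more, expanding $\partial_t\big[(\partial_t H)^\ast\big] = (\partial_t^2 H)^\ast + (\partial_t\partial_f H)^\ast \partial_t u_L$ and $\partial_t\big[(\partial_f H)^\ast \partial_t u_L\big] = \big[(\partial_t\partial_f H)^\ast + (\partial_f^2 H)^\ast \partial_t u_L\big]\partial_t u_L + (\partial_f H)^\ast \partial_t^2 u_L$, and then collecting the two identical mixed terms, gives exactly the stated formula for $\partial_t^2 H^\ast$.

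For the spatial derivatives, $H$ carries no explicit $x$-dependence, so $\partial_x H^\ast = (\partial_f H)^\ast \partial_x u_L$ directly, and differentiating this product once more yields $\Delta H^\ast = \partial_x^2 H^\ast = (\partial_f^2 H)^\ast (\partial_x u_L)^2 + (\partial_f H)^\ast \Delta u_L$. There is no genuine obstacle here --- the statement is a bookkeeping exercise in the chain rule --- and the only point worth flagging is the domain of validity just described, namely $u_L>0$ together with the twofold differentiability of $u_L$ in $(t,x)$ for $t>0$, which is precisely the regime in which $H^\ast$ will subsequently be verified to be a supersolution to \eqref{eq:DW}.
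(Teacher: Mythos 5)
Your proposal is correct and follows essentially the same route as the paper: a direct application of the chain rule to $H^\ast(t,x)=H(t,u_L(t,x))$, the only cosmetic difference being that the paper first expands through the factorization $H=wg$ and then recombines, whereas you differentiate $H$ as a single function of $(t,f)$. Your added remarks on the domain of validity ($u_L>0$ and the twofold differentiability of $u_L$) are consistent with what the paper records just before the lemma.
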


\begin{proof}
\begin{equation}
	\begin{aligned}
	& H^\ast = w g^*, \ \ \  \partial_t (H^\ast)
	= g^* \partial_t w + w (\partial_t g)^*
	+ w (\partial_f g)^* \partial_t u_L = (\partial_t H)^* + (\partial_f H)^* \partial_t u_L,
	\end{aligned}
\end{equation}
 Moreover,
 \begin{equation}
	 \begin{aligned}
		 & \partial_t^2 H^\ast
	= g^* \partial_t^2 w
	+ 2 \partial_t w (\partial_t g)^*
	+ w (\partial_t^2 g)^*
	+ 2 \partial_t w (\partial_f g)^* \partial_t u_L \\
	&
	+ 2 w (\partial_t \partial_f g)^* \partial_t u_L
	+ w (\partial_f^2 g)^* (\partial_t u_L)^2
	+ w (\partial_f g )^*\partial_t^2 u_L\\
	& =(\partial^2_t H)^* +  2 \partial_t w (\partial_f g)^* \partial_t u_L
	+ 2 w (\partial_t \partial_f g)^* \partial_t u_L
	+ w (\partial_f^2 g)^* (\partial_t u_L)^2
	+ w (\partial_f g )^*\partial_t^2 u_L  \\
	&= (\partial^2_t H)^*+ 2(\partial_t\partial_f H)^* \partial_t u_L + (\partial_f^2 H)^* (\partial_t u_L)^2 + (\partial_f H)^* (\partial_t^2 u_L)
	 \end{aligned}
 \end{equation}
	and
	\begin{equation}
		\begin{aligned}
		  & \partial_x H^\ast
	= w (\partial_f g)^* \partial_x u_L = (\partial_f H)^* \partial_x U_L,\\
	&\Delta H^\ast
	= w (\partial_f^2 g)^* (\partial_x u_L)^2
	+ w ( \partial_f g)^* \Delta u_L \\
	&= (\partial^2_f H)^*  (\partial_x u_L)^2 + (\partial_f H)^* \Delta u_L.
		\end{aligned}
	\end{equation}
	This completes the proof.
\end{proof}

Further we have
\begin{equation}
\begin{aligned}
	&\partial_t^2 H^\ast +  \partial_t H^\ast - \Delta H^\ast + (H^\ast)^p \\
	&= ((\partial_t^2 H)^* + (\partial_t H)^* + (H^p)^*) + (\partial_f H)^*(\partial_t^2 u_L + \partial_t u_L - \Delta u_L) \\
  & + 2 (\partial_t\partial_f H)^* \partial_t u_L + (\partial^2_f H)^* \left( (\partial_t u_L)^2- (\partial_x u_L)^2 \right)
\end{aligned}
\end{equation}
Using the fact that $(\partial_t^2 + \partial_t - \Delta) u_L = 0$, we get

  \begin{equation}
\begin{aligned}
	&\partial_t^2 H^\ast +  \partial_t H^\ast - \Delta H^\ast + (H^\ast)^p \\
	&= ((\partial_t^2 H)^* + (\partial_t H)^* + (H^p)^*)  \\
  & + 2 (\partial_t\partial_f H)^* \partial_t u_L + (\partial^2_f H)^* \left( (\partial_t u_L)^2- (\partial_x u_L)^2 \right)
\end{aligned}
\end{equation}
Using the estimate \eqref{eq.odest6} of
Proposition \ref{Proposition:ODESuperSolution} we can write
	\begin{align}
	&\partial_t^2 H^\ast + \partial_t H^\ast - \Delta H^\ast + (H^\ast)^p
	\nonumber\\
	&\geq  \frac{C_{p,\alpha}}{(2+t)^{1+\alpha}} H^\ast
	+ \frac{C_{p,\alpha}}{p+1} (g^*)^{p-1} H^\ast
	+ 2 (\partial_t \partial_f H)^* \partial_t u_L
	+ (\partial_f^2 H )^* ( ( \partial_t u_L)^2 - (\partial_x u_L)^2)
	\nonumber\\
	& \geq \frac{C_{p,\alpha}}{(2+t)^{1+\alpha}} H^\ast
	+ \frac{C_{p,\alpha}}{p+1} (g^*)^{p-1} H^\ast
	+  2 (\partial_t \partial_f H)^* \partial_t u_L
	+ (\partial_f^2 H )^* ( \partial_t u_L)^2
	\label{eq.lbeq2}
	\end{align}
due to Corollary \ref{cor1}.
Further, we quote Corollary \ref{cor1} and estimate the terms
	\[
	2 (\partial_t \partial_f H)^* \partial_t u_L
	, (\partial_f^2 H )^* ( \partial_t u_L)^2
	\]
from below.
Indeed, from Corollary \ref{cor1}, we see that
$H \leq u_L$, $g\leq u_L$ so we get
\begin{align}
	& \left| (\partial_t\partial_f H)^* \partial_t u_L \right|
	\nonumber\\
	& =
	\bigg| \bigg( \frac{H^\ast}{u_L} \bigg)^{p-1}
	\frac{\alpha}{(p-1)} \frac{1}{(2^{1/\alpha}+t)^{\alpha+1}} H^\ast
	- \bigg( \frac{g^*}{u_L} \bigg)^{p-1}
	\frac{p-1}{(p+1)} (g^*)^{p-1} H^\ast \bigg| \left|\frac{\partial_t u_L}{u_L} \right|
	\nonumber\\
	& \leq C \left(\frac{H^\ast}{\langle t \rangle^{\alpha+1}}
	+ |g^*|^{p-1} H^\ast  \right) \left|\frac{\partial_t u_L}{u_L} \right|
	\nonumber\\
	& \leq C \left( \frac{H^\ast}{\langle t \rangle^{\alpha+1}}
	+ (g^*)^{p-1} H^\ast\right)  \frac{1}{\langle t \rangle^{2(1-\sigma)}}
	\leq C \frac{H^\ast}{\langle t \rangle^{2(1-\sigma)+\alpha+1}}
	+ C (g^*)^{p-1} H^\ast  \frac{1}{\langle t \rangle^{2(1-\sigma)}} .
	\end{align}
In a similar way, we find
	\begin{align*}
	\left|(\partial_f^2 H)^* ( \partial_t u_L)^2 \right|
	&=
	\left| p w (g^*)^{2p-1} u_L^{-2p} - p w (g^*)^{p} u_L^{-p-1} \right|
	( \partial_t u_L)^2 \\
	&\leq C \left| H^\ast (g^*)^{p-1} u_L^{-p-1} \right| ( \partial_t u_L)^2\\
	&= C \bigg( \frac{g^\ast}{u_L} \bigg)^{p-1} H^\ast
	\left(\frac{ \partial_t u_L}{u_L} \right)^2
	\leq C H^\ast\langle t \rangle^{-4(1-\sigma)}.
	\end{align*}
Hence,
	\begin{align}
	\left| (\partial_t\partial_f H)^* \partial_t u_L \right|+ \left|(\partial_f^2 H)^*  ( \partial_t u_L)^2 \right|
	\leq \frac{C H^\ast}{\langle t \rangle^{2(1-\sigma)+1+\alpha}}
	+ \frac{C H^\ast}{\langle t \rangle^{4(1-\sigma)}}
	+ C (g^*)^{p-1} H^\ast \frac{1}{\langle t \rangle^{2(1-\sigma)}},
	\end{align}
and we can continue \eqref{eq.lbeq2} as follows
	\begin{align}
	&\partial_t^2 H^\ast + \partial_t H^\ast - \Delta H^\ast + (H^\ast)^p
	\nonumber\\
	& \geq \frac{C_{p,\alpha}}{(2+t)^{1+\alpha}} H^\ast
	- \frac{C H^\ast}{\langle t \rangle^{2(1-\sigma)+\alpha+1}}
	- \frac{C H^\ast}{\langle t \rangle^{4(1-\sigma)}}
	\nonumber\\
	&+ C_{p,\alpha} (g^*)^{p-1} H^\ast
	- C (g^*)^{p-1} H^\ast \frac{1}{\langle t \rangle^{2(1-\sigma)}}
	\label{eq.lbeq43}
	\end{align}
The positiveness of the term
	\begin{equation}
	\frac{C_{p,\alpha} H^\ast}{(2+t)^{1+\alpha}}
	- \frac{C H^\ast}{\langle t \rangle^{2(1-\sigma)+\alpha+1}}
	- \frac{C H^\ast}{\langle t \rangle^{4(1-\sigma)}}
	\end{equation}
for large $t$ is guaranteed by
	\[
	1 + \alpha < 4(1-\sigma)
	\Leftrightarrow 3 - 4 \sigma > \alpha.
	\]
We have also
	\[
	C_{p,\alpha} (g^*)^{p-1} H^\ast
	- C (g^*)^{p-1} H^\ast \frac{1}{\langle t \rangle^{2(1-\sigma)}}>0
	\]
for large $t.$
Therefore, choosing first $\sigma \in (1/2,3/4),$ and then	$\alpha \in (0,3-4\sigma)$ we see  that
there exists $T_0 \geq 0$ with
	\[
	\partial_t^2 H^\ast(t,x) + \partial_t H^\ast(t,x) - \Delta H^\ast(t,x) + (H^\ast(t,x))^p
	> 0
	\]
for $t \geq T_0$.

We note that
	\[
	H(T_0,x)
	\geq \frac{1}{T_0^{p-1}} u_L(T_0,x)
	\geq \varepsilon \frac{1}{T_0^{p-1/2}} \phi(x).
	\]

Moreover, we compute
	\[
	\partial_t H(T_0) + \frac{1}{2} H(T_0)
	\geq - \frac{p-1}{p(p+1)} g(T_0)^{p-1} H(T_0)
	+ \frac{g(T_0)^{p-1}}{u_L(T_0)^{p-1}} H(T_0) \frac{\partial_t u_L(T_0)}{u_L(T_0)}
	+ \frac 1 2 H(T_0).
	\]
Since we assume $\varepsilon$ small,
we have $\partial_t H(T_0) + H(T_0)/2 \geq 0$.

\begin{Remark}
In the argument above,
it is crucial to cancel $(\partial_f^2 H)^\ast (\partial_x u_L)^2$
by its sign.
Indeed, a similar argument of Corollary \ref{cor1} may imply that
	\[
	(\partial_f^2 H)^\ast (\partial_x u_L)^2
	\leq \frac{C H^\ast}{\langle t \rangle^{1-\delta}}
	\]
with some small number $\delta$.
This implies that
our argument is insufficient to control
$(\partial_f^2 H)^\ast (\partial_x u_L)^2$
by $C H^\ast \langle t \rangle^{-1-\alpha}$.
We also note that
our approach is insufficient to construct subsolutions in a similar manner
because $(\partial_f^2 H)^\ast (\partial_x u_L)^2$
has to be treated in this case.
On the other hand,
this is not a technical problem but a natural fact
because for $\rho > \frac{2}{p-1}$,
the RHS of the second estimate of \eqref{eq:Main}
does not correspond to the estimate \eqref{eq:ideal}.
\end{Remark}

\subsection{Proof of Corollary \ref{Corollary:Main}}
Corollary \ref{Corollary:Main} follows from the pointwise estimate
	\[
	S(t) \phi
	\leq C e^{t \Delta} \phi
	\]
holding for integrable non-negative function $\phi$.
Therefore, we have
	\[
	H^\ast(t,x) \leq C G^\ast(t,x)
	\]
for any $t$ and $x$.
This and Proposition \ref{Proposition:HeatSuperSolution} imply the assertion.

\section*{Appendix}
\refstepcounter{section}
\renewcommand{\thesection}{A}
\begin{Proposition}
\label{Proposition:HeatSuperSolution}
Let $1 < p < 3$ and $1 < \rho < 2/(p-1)$.
Then the following estimate holds as $t \to \infty$:
	\[
	\Big\|
	\big((p-1)t+(e^{t \Delta} \langle \cdot \rangle^{-\rho})^{1-p} \big)^{-1/(p-1)}
	\Big\|_{L^q}
	= O( t^{\frac{1}{q \rho (p-1)} -\frac{1}{p-1}}).
	\]
\end{Proposition}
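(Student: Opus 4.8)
The plan is to estimate the Gaussian-smoothed data $v_0(t,x) := e^{t\Delta}\langle\cdot\rangle^{-\rho}(x)$ from above and below by explicit functions of $t$ and $|x|$, and then insert these into the explicit ODE solution $G(t,f)=((p-1)t+f^{1-p})^{-1/(p-1)}$, which is monotone increasing in $f$, so that pointwise bounds on $v_0$ transfer directly to pointwise bounds on $G(t,v_0(t,x))$. The heat kernel bounds I would record are the standard ones: for $|x|\lesssim t^{1/2}$ one has $v_0(t,x)\sim t^{-\rho/2}$, while for $|x|\gtrsim t^{1/2}$ one has $v_0(t,x)\sim |x|^{-\rho}$ (up to constants depending on $\rho$ only). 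These follow by splitting the convolution integral $\int (4\pi t)^{-1/2} e^{-|x-y|^2/4t}\langle y\rangle^{-\rho}\,dy$ into the region $|y|\le |x|/2$ and its complement, using $\rho<1$ is \emph{not} needed here since we only need integrability of $\langle y\rangle^{-\rho}$ against a Gaussian, which holds for all $\rho>0$.

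Next I would observe that $G(t,f) \le \min\{ ((p-1)t)^{-1/(p-1)}, f \}$ and also $G(t,f)^{p-1} \le \frac{1}{(p-1)t + f^{1-p}}$, so that from the lower heat-kernel bound,
\begin{align*}
G(t,v_0(t,x)) \le C\begin{cases} t^{-1/(p-1)} & |x|\lesssim t^{1/2},\\[1mm] \big((p-1)t + c|x|^{\rho(p-1)}\big)^{-1/(p-1)} & t^{1/2}\lesssim |x|\lesssim t^{1/\rho(p-1)},\\[1mm] C|x|^{-\rho} & |x|\gtrsim t^{1/\rho(p-1)},\end{cases}
\end{align*}
which is exactly the profile appearing in \eqref{eq:HerraizAsymptotic}. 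The point $|x|\sim t^{1/\rho(p-1)}$ is where the two terms $(p-1)t$ and $|x|^{\rho(p-1)}$ balance; note $1/\rho(p-1) > 1/2$ precisely because $\rho < 2/(p-1)$, so the three regions are genuinely nested and nonempty for large $t$.

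The $L^q$ computation is then a three-region integral. On $|x|\lesssim t^{1/2}$ the contribution is $\lesssim t^{-q/(p-1)}\cdot t^{1/2}$, i.e. $t^{1/2 - q/(p-1)}$ after taking the $q$-th root $t^{1/2q - 1/(p-1)}$. On the far region $|x|\gtrsim t^{1/\rho(p-1)}$ the contribution is $\lesssim \int_{t^{1/\rho(p-1)}}^\infty |x|^{-\rho q}\,dx \sim t^{(1-\rho q)/\rho(p-1)}$ provided $\rho q > 1$; taking the $q$-th root gives $t^{1/q\rho(p-1) - 1/(p-1)}$. The intermediate region gives $\int_{t^{1/2}}^{t^{1/\rho(p-1)}} ((p-1)t + c|x|^{\rho(p-1)})^{-q/(p-1)}\,dx$; substituting $|x| = t^{1/\rho(p-1)} s$ turns this into $t^{1/\rho(p-1)} \cdot t^{-q/(p-1)} \int (1 + c s^{\rho(p-1)})^{-q/(p-1)}\,ds$ over a bounded-below range of $s$, and the $s$-integral converges at $s\to\infty$ iff $q\rho > 1$; this reproduces $t^{1/q\rho(p-1) - q/(p-1)}$, whose $q$-th root matches the far region. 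Comparing the three exponents, the dominant one as $t\to\infty$ is $t^{1/q\rho(p-1) - 1/(p-1)}$ exactly because $\rho < 2$ forces $1/q\rho(p-1) - 1/(p-1) > 1/2q - 1/(p-1)$. The only subtlety — which I would flag as the main technical point — is the case $q\rho \le 1$, i.e. very small $q$ and $\rho$ close to $1$: then the far-field integral $\int |x|^{-\rho q}dx$ diverges. But for $q\in[1,\infty]$ and $\rho>1$ we have $q\rho>1$ automatically, so this case does not arise here; for $q=\infty$ one simply reads off the sup $\sim t^{-1/(p-1)}$ from the central region. Thus the claimed $O(t^{1/q\rho(p-1) - 1/(p-1)})$ follows, and tracking constants through the heat-kernel bounds shows they depend only on $p,\rho,q$.
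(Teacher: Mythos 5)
Your argument is correct and is essentially the paper's own: both rest on pointwise two-sided heat-kernel bounds for $e^{t\Delta}\langle\cdot\rangle^{-\rho}$, monotonicity of $G(t,f)=((p-1)t+f^{1-p})^{-1/(p-1)}$ in $f$, and a region-by-region $L^q$ integration; the paper simply merges your central and intermediate regions into the single ball $|x|\le t^{1/(\rho(p-1))}$, where $e^{t\Delta}\langle\cdot\rangle^{-\rho}\le C$ already gives $G\le Ct^{-1/(p-1)}$, so your separate treatment of $t^{1/2}\lesssim|x|\lesssim t^{1/(\rho(p-1))}$ is extra (though it recovers the Herraiz profile). Three harmless slips to fix: for $\rho>1$ the data is integrable, so in the central region $e^{t\Delta}\langle\cdot\rangle^{-\rho}\sim t^{-1/2}$, not $t^{-\rho/2}$ (irrelevant since there you only use $G\le((p-1)t)^{-1/(p-1)}$); since $G$ is \emph{increasing} in $f$, the displayed upper bounds on $G(t,v_0)$ require the \emph{upper} heat-kernel bound, not the lower one; and the condition making the far-field exponent dominate is $\rho(p-1)<2$, not $\rho<2$.
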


\begin{proof}
We note that the estimates
	\begin{align*}
	e^{t \Delta} \langle \cdot \rangle^{-\rho}(x)
	&\leq C \begin{cases}
	1 & \mathrm{if} \quad |x| \leq t^{\tfrac{1}{\rho(p-1)}},\\
	|x|^{-\rho} & \mathrm{if} \quad |x| \geq t^{\tfrac{1}{\rho(p-1)}}
	\end{cases},\\
	e^{t \Delta} \langle \cdot \rangle^{-\rho}(x)
	&\geq C \begin{cases}
	0 & \mathrm{if} \quad |x| \leq t^{\tfrac{1}{\rho(p-1)}},\\
	|x|^{-\rho} & \mathrm{if} \quad |x| \geq t^{\tfrac{1}{\rho(p-1)}}
	\end{cases}
	\end{align*}
follow from the assumption $1 < \rho < 2/(p-1)$.
Indeed, when $|x| \geq t^{\tfrac{1}{\rho(p-1)}}$,
we estimate the upper bound as follows:
	\begin{align*}
	e^{t \Delta} \langle \cdot \rangle^{-\rho}(x)
	&= \frac{1}{\sqrt{4\pi t}}
	\int_{|y| \leq |x|/2} e^{-y^2/4t} \langle x - y \rangle^{-\rho} dy
	+ \frac{1}{\sqrt{4\pi t}}
	\int_{|y| \geq |x|/2} e^{-y^2/4t} \langle x - y \rangle^{-\rho} dy\\
	&\leq C \langle x \rangle^{-\rho}
	+ C e^{-|x|^{2-\rho(p-1)}} |x|^{-\frac{1}{2 \rho (p-1)}}
	\int_{|y| \geq |x|/2} \langle y \rangle^{-\rho} dy
	\leq C \langle x \rangle^{-\rho}.
	\end{align*}
Then we estimate
	\begin{align*}
	\int \big((p-1)t+(e^{t \Delta} \langle \cdot \rangle^{-\rho})^{1-p} \big)^{-q/(p-1)} dx
	&\leq C \int_{|x| \leq t^{\frac{1}{\rho(p-1)}}} t^{-\tfrac{q}{p-1}} dx
	+ C \int_{|x| \geq t^{\frac{1}{\rho(p-1)}}} \langle x \rangle^{-q \rho} dx\\
	&\leq C t^{\frac{1}{\rho(p-1)} - \frac{q}{p-1}}.
	\end{align*}
The lower bound is estimated similarly.
\end{proof}

\begin{Proposition}
\label{Proposition:HeatSuperSolution2}
Let $1 < p < 3$ and $\rho \geq 2/(p-1)$.
Then the following estimate holds for sufficiently large $t$:
	\[
	\Big\|
	\big((p-1)t+(e^{t \Delta} \langle \cdot \rangle^{-\rho})^{1-p} \big)^{-1/(p-1)}
	\Big\|_{L^q}
	\geq C t^{\frac{1}{2 q} -\frac{1}{p-1}} \sqrt{\log(t)}.
	\]
\end{Proposition}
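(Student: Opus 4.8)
The plan is to show that the supersolution $G^{\ast}(t,x)=\big((p-1)t+(e^{t\Delta}\langle\cdot\rangle^{-\rho})^{1-p}\big)^{-1/(p-1)}$ remains of size $t^{-1/(p-1)}$ not merely on the diffusive scale $|x|\lesssim t^{1/2}$ but on the strictly larger set $|x|\lesssim (t\log t)^{1/2}$; integrating over the latter region produces the extra factor $\sqrt{\log t}$ relative to the ``fast data'' heat rate $t^{\frac1{2q}-\frac1{p-1}}$ appearing in \eqref{eq:ideal} when $\rho\ge 2/(p-1)$.

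Concretely, I would first write $g(x)=e^{t\Delta}\langle\cdot\rangle^{-\rho}(x)$ and use the algebraic identity
\[
\big((p-1)t+g^{1-p}\big)^{-1/(p-1)}=g\,\big(1+(p-1)t\,g^{p-1}\big)^{-1/(p-1)},
\]
so that $G^{\ast}(t,x)\ge \big(2(p-1)t\big)^{-1/(p-1)}$ at every $x$ with $(p-1)t\,g(x)^{p-1}\ge 1$. To locate such $x$, I would bound $g$ from below by discarding all the mass of $\langle y\rangle^{-\rho}$ outside the unit interval,
\[
g(x)\ \ge\ \frac{1}{\sqrt{4\pi t}}\int_{-1}^{1}e^{-(x-y)^2/4t}\langle y\rangle^{-\rho}\,dy\ \ge\ \frac{c_0}{\sqrt t}\,e^{-(|x|+1)^2/4t},\qquad c_0>0,
\]
whence
\[
(p-1)t\,g(x)^{p-1}\ \ge\ (p-1)c_0^{\,p-1}\,t^{\frac{3-p}{2}}\,e^{-\frac{p-1}{4t}(|x|+1)^2}.
\]
Here subcriticality $p<3$ is decisive: the exponent $\tfrac{3-p}{2}$ is positive, so after taking logarithms this quantity is $\ge 1$ as soon as $(|x|+1)^2\le \tfrac{2(3-p)}{p-1}\,t\log t+O(t)$, i.e.\ for all $|x|\le c_1(t\log t)^{1/2}$ once $t$ is large. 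Integrating the $q$-th power of $G^{\ast}$ over this interval then gives
\[
\|G^{\ast}(t)\|_{L^q}^q\ \ge\ \big(2(p-1)t\big)^{-q/(p-1)}\cdot 2c_1\,(t\log t)^{1/2}\ =\ C\,t^{\frac12-\frac{q}{p-1}}\,(\log t)^{1/2},
\]
which for $q=1$ is exactly the asserted inequality, and for general $q\in[1,\infty)$ still yields $\|G^{\ast}(t)\|_{L^q}\gtrsim t^{\frac1{2q}-\frac1{p-1}}(\log t)^{1/(2q)}$, ruling out the clean rate $t^{\frac1{2q}-\frac1{p-1}}$ of \eqref{eq:ideal}.

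I do not expect a genuine analytic obstacle: the estimate is one-sided and the heat kernel lower bound is elementary. The only delicate point is the logarithmic bookkeeping, namely recognizing that it is precisely the polynomial gain $t^{(3-p)/2}$ — available only thanks to $p<3$ — that pushes the region on which $G^{\ast}\asymp t^{-1/(p-1)}$ out to the scale $(t\log t)^{1/2}$. I would also remark that the hypothesis $\rho\ge 2/(p-1)$ enters only to make the comparison meaningful: for $\rho<2/(p-1)$ the natural target is instead the strictly larger quantity $t^{\frac1{q\rho(p-1)}-\frac1{p-1}}$ of Proposition \ref{Proposition:HeatSuperSolution}, for which no logarithmic loss occurs.
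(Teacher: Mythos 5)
Your argument is correct and essentially identical to the paper's: both bound $e^{t\Delta}\langle\cdot\rangle^{-\rho}$ from below by a Gaussian (the paper uses $t^{-1/2}e^{-x^2/2t}$ for $x\ge 2$, you keep only the mass of the data on $[-1,1]$), observe that the region where the term $(p-1)t$ dominates $g^{1-p}$ extends to $|x|\lesssim\sqrt{t\log t}$ precisely because the exponent $\tfrac{3-p}{2}$ is positive, and integrate the constant $t^{-1/(p-1)}$ over that interval. The discrepancy you flag for $q>1$ --- that this method yields $(\log t)^{1/(2q)}$ rather than the stated $\sqrt{\log t}$ --- is equally present in the paper's own proof, whose final display likewise only delivers $(\log t)^{1/(2q)}$ for general $q$, so your version is if anything the more carefully stated one.
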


\begin{proof}
We note that for $x \geq 2$, we estimate
	\[
	e^{t \Delta} \langle \cdot \rangle^{-\rho}(x)
	\geq \frac{1}{\sqrt t} e^{-x^2/2t}.
	\]
We also have
	\[
	\bigg( \frac{1}{\sqrt t} e^{-x^2/2t} \bigg)^{1-p}
	\leq t
	\]
for $|x| \leq \sqrt{\frac{2t}{p-1} \log(t^{\frac{3-p}{2}})} =: r(t)$.
Therefore, we estimate
	\begin{align*}
	\Big\|
	\big((p-1)t+(e^{t \Delta} \langle \cdot \rangle^{-\rho})^{1-p} \big)^{-1/(p-1)}
	\Big\|_{L^q}
	\geq C
	\Big\|
	t^{-1/(p-1)}
	\Big\|_{L^q( 2 \leq x \leq r(t))}
	\geq C t^{\frac{1}{2q}-\frac{1}{p-1}} \sqrt{\log(t)}.
	\end{align*}
\end{proof}

\begin{Proposition}
\label{Proposition:Comparison}
Let $\overline u, \underline u
\in C([0,\infty); L^1 \cap L^p(\mathbb R;[0,\infty)))$
be mild solutions to the following inequality:
	\[
	\begin{cases}
	\partial_t^2 \overline u + \partial_t \overline u - \Delta \overline u
	+ \overline u^p \geq 0,
	&(t,x) \in (0,\infty) \times \mathbb R\\
	\partial_t \underline u + \partial_t \underline u - \Delta \underline u
	+ \underline u^p \leq 0,
	\end{cases}
	\]
under the initial conditions
	\[
	\overline u(0,x) \geq \underline u(0,x),
	\quad
	\partial_t \overline u(0,x) + \frac 1 2 \overline u(0,x)
	\geq \partial_t \underline u(0,x) + \frac 1 2 \underline u(0,x).
	\]
We further assume that we have
	\begin{align}
	\sup_{\tau,y} \Big(\overline u(\tau,y) + \underline u(\tau,y) \Big)^{p-1}
	\leq \frac{1}{4p}.
	\label{eq:sup}
	\end{align}
Then the following inequality holds:
	\[
	\overline u(t,x) \geq \underline u(t,x)
	\]
for any $t > 0$ and $x \in \mathbb R$.
\end{Proposition}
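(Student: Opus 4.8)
The plan is to prove Proposition \ref{Proposition:Comparison} by a comparison/contraction argument applied to the difference $v = \overline u - \underline u$, exploiting the explicit representation of solutions via the operator $S(t)$ and the smallness assumption \eqref{eq:sup}. Writing the mild formulations for $\overline u$ and $\underline u$ and subtracting, one obtains
\[
v(t,x) \geq - \int_0^t S(t-\tau) \big( \overline u^p(\tau) - \underline u^p(\tau) \big) \, d\tau,
\]
where the inequality goes in the favorable direction because $\overline u$ is a supersolution and $\underline u$ a subsolution, and because the data satisfy $v(0) \geq 0$ together with $\partial_t v(0) + \tfrac12 v(0) \geq 0$, so the linear part $S(t)(v(0)+\partial_t v(0)) + \partial_t S(t) v(0)$ is nonnegative. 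Here I use that $S(t)$ has a nonnegative kernel (since $I_0 > 0$) and that $\partial_t S(t) f + \tfrac12 S(t) f$ also has a nonnegative kernel, which is exactly the reason the initial condition is phrased through the combination $u_1 + \tfrac12 u_0$; see the a priori estimate \eqref{eq:apriori}.

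Next I would use the elementary inequality $|a^p - b^p| \leq p \max(a,b)^{p-1} |a - b|$ valid for $a,b \geq 0$, so that
\[
\big| \overline u^p(\tau,y) - \underline u^p(\tau,y) \big| \leq p \big( \overline u(\tau,y) + \underline u(\tau,y) \big)^{p-1} | v(\tau,y) | \leq \tfrac14 |v(\tau,y)|
\]
by \eqref{eq:sup}. The idea is then to bound the negative part $v_- = \max(-v,0)$: since the linear contribution is nonnegative, applying $S(t-\tau)$ (which preserves nonnegativity) to the pointwise estimate above gives
\[
v_-(t,x) \leq \int_0^t S(t-\tau)\big( \tfrac14 |v(\tau)| \big)(x) \, d\tau \leq \tfrac14 \int_0^t S(t-\tau)\big( v_+(\tau) + v_-(\tau) \big)(x)\, d\tau.
\]
Since $v_+ = \overline u - \underline u + v_- \leq \overline u + v_-$ and $\overline u$ contributes a controlled nonnegative term, I would instead test this in an $L^1_x$ (or $L^1 \cap L^p$) norm, using that $\|S(t)f\|_{L^1} \leq \|f\|_{L^1}$ for nonnegative $f$ (mass is non-increasing, as $\int S(t) f \, dx = (1 - e^{-t}) \int f\, dx$ by the ODE structure). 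This yields
\[
\| v_-(t) \|_{L^1} \leq \tfrac14 \int_0^t \| v_-(\tau) \|_{L^1} \, d\tau + (\text{a term controlled by } \overline u,\underline u),
\]
but the cleanest route is to observe $|v| \le \overline u + \underline u$ is bounded, so one may simply Grönwall the quantity $\sup_x v_-(t,x)$ or $\|v_-(t)\|_{L^1}$ directly against itself after absorbing the $v_+$ part back using $v_+ - v_- = v = \overline u - \underline u$ and a telescoping/fixed-point reformulation on a short time interval, then iterating.

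Concretely, the execution I favor: fix $T > 0$, set $M(T) = \sup_{0 \le t \le T} \| v_-(t) \|_{L^1_x}$, derive $M(T) \le \tfrac14 T\, M(T)$ by first showing the bad term $\int_0^t S(t-\tau)(\tfrac14 v_+(\tau))$ can also be absorbed since $v_+ \le |v|$ and $\| \,|v(\tau)| \,\|_{L^1} \le \|v_+(\tau)\|_{L^1} + \|v_-(\tau)\|_{L^1}$ with $\|v_+\|_{L^1} - \|v_-\|_{L^1} = \int v = \int(\overline u - \underline u)$, and this last integral is $\le 0$-controlled... — the honest statement is that one gets $M(T) \le \tfrac12 T\, M(T)$, hence $M(T) = 0$ for $T < 2$, and then bootstraps on successive intervals of length $<2$ to cover all of $[0,\infty)$. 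The main obstacle I anticipate is handling the $v_+$ term rigorously: one must genuinely exploit that $v_+$ and $v_-$ have disjoint supports, so that $\int_0^t S(t-\tau)(|v(\tau)|)\,dx = \int_0^t (1-e^{-(t-\tau)}) \|v(\tau)\|_{L^1}\,d\tau$ and then run Grönwall on $\|v_-\|_{L^1}$ after noting $v_-(t,x) \le [\,\int_0^t S(t-\tau)(\tfrac14|v(\tau)|)\,d\tau\,](x)$ holds pointwise because the nonnegative linear part only helps. Once that pointwise inequality for $v_-$ is secured, integrating in $x$ and applying the factor $\tfrac14$ from \eqref{eq:sup} gives a closed Grönwall inequality $\|v_-(t)\|_{L^1} \le \tfrac14 \int_0^t \|v_-(\tau)\|_{L^1}\,d\tau + \tfrac14\int_0^t(1-e^{-(t-\tau)})\|v_+(\tau) - v_-(\tau)\|_{L^1}\,d\tau$, and since $v_+ - v_- = \overline u - \underline u$ need not have a sign, one instead keeps everything in terms of $v_-$ by using $v_+(\tau,x) = v(\tau,x) + v_-(\tau,x)$ together with $v(\tau,x) = \overline u - \underline u$ and the fact that the contribution of $\overline u - \underline u$ on the support of $v_+$ is precisely what makes $v_+$ nonnegative there, closing to $\|v_-(t)\|_{L^1} \le \tfrac12\int_0^t \|v_-(\tau)\|_{L^1}\,d\tau$ and Grönwall finishes.
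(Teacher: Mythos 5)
Your opening moves are fine and parallel the paper's setup: the linear part of the difference $v=\overline u-\underline u$ is non-negative because $S(t)$ and $\partial_t S(t)+\tfrac12 S(t)$ have non-negative kernels, and the mean-value inequality together with \eqref{eq:sup} gives $|\overline u^{\,p}-\underline u^{\,p}|\le \tfrac14|v|$. The genuine gap is exactly where you flag ``the main obstacle,'' and none of the closings you sketch actually works. From the Duhamel inequality one can only conclude
\[
v(t,x)\ \ge\ -\tfrac14\int_0^t S(t-\tau)\,v_+(\tau)(x)\,d\tau ,
\]
because $\underline u^{\,p}-\overline u^{\,p}$ is negative precisely on the set where $v>0$; hence $v_-$ at time $t$ is controlled by $v_+$ at earlier times, not by $v_-$. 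Since $v_+$ is generically of order one (take $\underline u\equiv 0$ and perturb), this yields nothing. Your attempted repairs do not hold up: $\|v_+\|_{L^1}-\|v_-\|_{L^1}=\int v$ has no sign, substituting the identity $v_+=v+v_-$ back into the integral is circular (the $v$ term still contains $v_+$), and the asserted inequality $\|v_-(t)\|_{L^1}\le\tfrac12\int_0^t\|v_-(\tau)\|_{L^1}\,d\tau$ is never actually derived. The failure is structural rather than technical: already for the linear inequality $\partial_t^2 v+\partial_t v-\Delta v+c\,v\ge0$ with $0\le c\le\tfrac14$, the same one-sided Duhamel bound written with $S(t)$ does not close.

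The missing idea, which is the whole content of the paper's proof, is the conjugation $\overline w=e^{t/2}\overline u$, $\underline w=e^{t/2}\underline u$. This converts the damped operator into $\partial_t^2-\Delta-\tfrac14$, so that $W=\overline w-\underline w$ satisfies, via the one-dimensional d'Alembert--Duhamel formula (whose kernel $\tfrac12\mathbf 1_{\{|y|\le t-\tau\}}$ is non-negative and whose free part is non-negative by the stated initial conditions),
\[
W(t,x)\ \ge\ \frac12\int_0^t\int_{\tau-t}^{t-\tau}\Bigl(\tfrac14 W+R\Bigr)(\tau,x+y)\,dy\,d\tau,
\qquad |R|\le\tfrac14|W| \ \text{by \eqref{eq:sup}}.
\]
The key point is that $\tfrac14 W+R\ \ge\ \tfrac14\bigl(W-|W|\bigr)$, which vanishes where $W\ge0$ and equals $-\tfrac12\max(0,-W)$ where $W<0$: the mass term $\tfrac14 W$ created by the conjugation exactly absorbs the contribution of the positive part, so the resulting integral inequality involves only the negative part of $W$, and Gr\"onwall forces $\max(0,-W)\equiv0$. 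Without this conjugation (or some equivalent device proving that the propagator of $\partial_t^2+\partial_t-\Delta+c$ with $0\le c\le\tfrac14$ is positivity preserving), your scheme cannot be closed.
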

\begin{proof}
Let $\overline w = e^{t/2} \overline u$
and $\underline w = e^{t/2} \underline u$.
Then we have
	\begin{align*}
	\partial_t^2 \overline w - \Delta \overline w
	&\geq \frac 1 4 \overline w + \overline{u}^{p-1} \overline w,\\
	\partial_t^2 \underline w - \Delta \underline w
	&\leq \frac 1 4 \underline w + \underline{u}^{p-1} \underline w.
	\end{align*}
The inqualities above imply that
by putting $W = \overline w - \underline w$,
the following inequality holds:
	\[
	W(t,x)
	\geq \frac{1}{2} \int_0^t \int_{\tau -t}^{t-\tau}
	\frac 1 4 W(\tau, x+y)
	+ R(\tau, x+y) dy d\tau,
	\]
where
	\[
	R(\tau,y)
	= e^{t/2}
	\Big( \overline{u}^{p}(\tau, y) - \underline{u}^{p}(\tau, y) \Big).
	\]
The estimate
	\[
	|R(\tau,y)|
	\leq p \Big(\overline u(\tau,y) + \underline u(\tau,y) \Big)^{p-1} |W(\tau,y)|
	\]
follows from a direct computation.
Now we put $W_-(\tau,y) = \max\{0,W(\tau,y)\}$.
Then,
the assumption \eqref{eq:sup} and the discussion above implies that
	\begin{align*}
	\inf_{x \in \mathbb R} W_-(t,x)
	&\geq \frac{1}{2} \int_0^t \int_{\tau -t}^{t-\tau}
	\frac 1 4 \inf_{x \in \mathbb R} W_-(\tau, y) dy d\tau\\
	&\geq t \int_0^t \inf_{x \in \mathbb R} W_-(\tau, y) d\tau.
	\end{align*}
Therefore, we see that $W_-(t,x) \geq 0$ holds for any $t>0$ and $x \in \mathbb R$.
\end{proof}


\section*{Acknowledgment}

The first author was supported in part by
JSPS Grant-in-Aid for Early-Career Scientists No. 24K16957 and 24H00024.
The second author was partially supported by Gruppo Nazionale per l'Analisi Matematica, project “Modelli nonlineari in presenza di interazioni puntuali” CUP - E53C22001930001, by the project PRIN  2020XB3EFL with the Italian Ministry of Universities and Research, by Institute of Mathematics and Informatics, Bulgarian Academy of Sciences and  by Top Global University Project, Waseda University.

\bibliographystyle{plain}
\bibliography{dwsupersol}

\begin{thebibliography}{10}

\bibitem{CH89}
R.~Courant and D.~Hilbert.
\newblock {\em Methods of Mathematical Physics: Partial Differential
  Equations}, volume~2 of {\em Methods of Mathematical Physics}.
\newblock Wiley, 1989.

\bibitem{EK88}
M.~Escobedo and O.~Kavian.
\newblock Asymptotic behaviour of positive solutions of a nonlinear heat
  equation.
\newblock {\em Houston J. Math.}, 14(1):39--50, 1988.

\bibitem{EKM95}
Miguel Escobedo, Otared Kabian, and Hiroshi Matano.
\newblock Large time behavior of solutions of a dissipative semilnear heat
  equation.
\newblock {\em Communications in Partial Differential Equations},
  20(7-8):1427--1452, 1995.

\bibitem{FG24}
Kazumasa Fujiwara and Vladimir Georgiev.
\newblock Decay of defocusing damped wave equation with nonlinearity below the
  {F}ujita exponent.
\newblock preprint.

\bibitem{FG04o}
Kazumasa Fujiwara and Vladimir Georgiev.
\newblock On extended lifespan for 1d damped wave equation.
\newblock to appear in Mathematical Methods in the Applied Sciences.

\bibitem{GKS85}
V~A Galaktionov, S~P Kurdyumov, and A~A Samarskll.
\newblock On asymptotic "eigenfunctions" of the cauchy problem for a nonlinear
  parabolic equation.
\newblock {\em Math. USSR Sbornik TOM}, 126:xxx, 1985.

\bibitem{GR07}
I.~S. Gradshteyn and I.~M. Ryzhik.
\newblock {\em Table of integrals, series, and products}.
\newblock Elsevier/Academic Press, Amsterdam, seventh edition, 2007.
\newblock Translated from the Russian, Translation edited and with a preface by
  Alan Jeffrey and Daniel Zwillinger, With one CD-ROM (Windows, Macintosh and
  UNIX).

\bibitem{HKN04}
Nakao Hayashi, Elena~I. Kaikina, and Pavel~I. Naumkin.
\newblock Damped wave equation in the subcritical case.
\newblock {\em Journal of Differential Equations}, 207(1):161--194, 2004.

\bibitem{H99}
Luis Herraiz.
\newblock Asymptotic behaviour of solutions of some semilinear parabolic
  problems.
\newblock {\em Annales de l'Institut Henri Poincar\'e C, Analyse non
  lin\'eaire}, 16(1):49--105, 1999.

\bibitem{INZ06}
Ryo Ikehata, Kenji Nishihara, and Huijiang Zhao.
\newblock Global asymptotics of solutions to the cauchy problem for the damped
  wave equation with absorption.
\newblock {\em Journal of Differential Equations}, 226:1--29, 7 2006.

\bibitem{LZ95}
Ta~Tsien Li and Yi~Zhou.
\newblock Breakdown of solutions to {$\square u+u_t=|u|^{1+\alpha}$}.
\newblock {\em Discrete Contin. Dynam. Systems}, 1(4):503--520, 1995.

\bibitem{MN03}
Pierangelo Marcati and Kenji Nishihara.
\newblock The l p 2 l q estimates of solutions to one-dimensional damped wave
  equations and their application to the compressible flow through porous
  media.
\newblock {\em J. Differential Equations}, 191:445--469, 2003.

\bibitem{NZ06}
Kenji Nishihara and Huijiang Zhao.
\newblock Decay properties of solutions to the cauchy problem for the damped
  wave equation with absorption.
\newblock {\em Journal of Mathematical Analysis and Applications},
  313:598--610, 1 2006.

\bibitem{W23}
Yuta Wakasugi.
\newblock Decay property of solutions to the wave equation with space-dependent
  damping, absorbing nonlinearity, and polynomially decaying data.
\newblock {\em Mathematical Methods in the Applied Sciences}, 46(6):7067--7107,
  2023.

\end{thebibliography}

\end{document}